\newtheorem{theorem}{Theorem}
\newtheorem{lemma}[theorem]{Lemma}
\newtheorem{proposition}[theorem]{Proposition}
\theoremstyle{definition}
\newtheorem*{remark}{Remark}
\renewcommand{\Re}{\textrm{Re}}
\begin{document}

\title{Explicit bounds for the Riemann zeta-function on the 1-line}
\author[G.\ Hiary, N.\ Leong, \and A.\ Yang]{Ghaith A. Hiary, Nicol Leong, and Andrew Yang}

\address{
    GH: Department of Mathematics, The Ohio State University, 231 West 18th
    Ave, Columbus, OH 43210, USA
}
\email{hiary.1@osu.edu}
\address{
    NL: University of New South Wales (Canberra) at the Australian Defence Force Academy, ACT, Australia
}
\email{nicol.leong@unsw.edu.au}
\address{
    AY: University of New South Wales (Canberra) at the Australian Defence Force Academy, ACT, Australia
}
\email{andrew.yang1@unsw.edu.au}

\subjclass[2020]{Primary: 11M06, 11Y35, 11L07.}
\keywords{Van der Corput estimate, exponential sums, Riemann zeta function.}

\begin{abstract}
Explicit estimates for the Riemann zeta-function on the $1$-line are derived
using various methods, in particular van der Corput lemmas of high order and a theorem of Borel and Carath\'{e}odory.
\end{abstract}

\maketitle

\section{Introduction}

Let $\zeta(s)$ denote the Riemann zeta-function. An open problem in analytic number theory is to determine the growth rate of $\zeta(s)$ on the line $s = 1 + it$ as $t \to +\infty$. Assuming the Riemann hypothesis, Littlewood \cite{littlewood_on_1926, littlewood_on_1928} showed that the value of $|\zeta(1+it)|$ is limited to the range
$$\frac{1}{\log\log t}\ll \zeta(1 + it) \ll \log \log t.$$ 
In comparison, the sharpest estimates requiring no assumption furnish a wider range of the form 
\begin{equation}\label{current form}
    \frac{1}{(\log t)^{2/3}(\log\log t)^{1/3}} \ll \zeta(1 + it) \ll \log^{2/3}t.
\end{equation}

The upper bound in the unconditional estimate \eqref{current form} was made explicit by Ford \cite{ford_vinogradov_2002}. He computed numerical constants $c$ and $t_0$ such that\footnote{Trudgian \cite{trudgian_new_2014} later lowered Ford's value of $c$ from $76.2$ to $62.6$.}
$$|\zeta(1 + it)| \le c \log^{2/3}t,\qquad (t\ge t_0).$$
The constants $c$ and $t_0$ are
 often too large for standard applications, 
 including explicit bounds on $S(t)$ \cite{trudgian_improved_2014, hasanalizade2022counting} and explicit zero-density estimates \cite{kadiri_explicit_2018}. 
 One instead  defaults to 
an asymptotically worse bound of the form $\zeta(1 + it) \ll \log t$  
that nevertheless is sharper in a finite range of $t$ of interest. 

This motivates our Theorem~\ref{main theorem}. 
We establish a new explicit bound on $\zeta(1 + it)$ that is 
asymptotically sharper than $O(\log t)$ but still good at small values of $t$.  
\begin{theorem}\label{main theorem}
For $t \ge 3$, we have 
$$|\zeta(1 + it)| \le 1.731\frac{\log t}{\log \log t}.$$
\end{theorem}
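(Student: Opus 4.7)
The plan is to begin from an explicit Euler--Maclaurin truncation of the Dirichlet series,
$$
\zeta(1+it) = \sum_{n \le N} \frac{1}{n^{1+it}} + \frac{N^{-it}}{it} + R_N(t),
$$
with an explicit and small remainder $R_N(t)$. Choosing $N$ of size $t$ makes both the boundary term and $R_N(t)$ negligible, and the problem reduces to bounding $\sum_{n \le N} n^{-1-it}$. I would then split this sum at a parameter $X=X(t)$: for $n \le X$ use the trivial bound $\sum_{n \le X} 1/n \le \log X + \gamma + 1/(2X)$, and for $n > X$ exploit oscillation. The outcome of the balancing will force $\log X$ to be of order $\log t/\log\log t$, which is precisely the asymptotic shape claimed.

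To control the oscillatory tail $\sum_{X < n \le N} n^{-1-it}$, I would apply partial summation and an explicit van der Corput $k$-th derivative lemma to the partial sums $T(y) = \sum_{X < n \le y} n^{-it}$. Writing the phase as $f(x) = -t\log x /(2\pi)$, one has $|f^{(k)}(x)| \asymp t/x^k$, so the $k$-th derivative test on a dyadic block $M < n \le 2M$ delivers a bound of the shape
$$
\Big|\sum_{M < n \le 2M} n^{-it}\Big| \ll M^{1-\theta_k}\, t^{\rho_k} + M\, t^{-\sigma_k},
$$
with exponents $\theta_k,\rho_k,\sigma_k$ shrinking geometrically in $k$. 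Summing dyadically and dividing by $X$ (as required by partial summation) yields an $O(1)$ tail provided $k$ is chosen of order $\log\log t/\log 2$ and $X$ is chosen with $\log X$ of order $\log t/\log\log t$. This reproduces the overall form $|\zeta(1+it)| \le (1+o(1))\log t/\log\log t$.

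The sharpening of the constant to $1.721$ is where I would bring in the Borel--Carath\'eodory theorem. Applying it to $\log \zeta(s)$ on a small disk centered at $1+it$, with boundary input a bound on $\Re \log \zeta(s) = \log|\zeta(s)|$ at a slightly smaller $\sigma$ (where the van der Corput estimates save a larger power), transfers the logarithmic bound back to $s = 1+it$ and typically yields a better numerical constant than a direct real-variable manipulation of the Dirichlet series. A related technique would be to use this $\log\zeta$ strategy only for large $t$, retaining a cleaner partial-sum bound in an initial range.

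The main obstacle is quantitative bookkeeping: every constant must be tracked explicitly through (i) the $k$-th derivative test (where the constants depend delicately on $k$), (ii) Euler--Maclaurin remainders and partial summation, (iii) the Borel--Carath\'eodory inequality with explicit radii $r<R$, and (iv) the optimization in $X$, $N$, and $k$, which is likely done numerically. A secondary difficulty is that the van der Corput / Borel--Carath\'eodory machinery becomes wasteful for moderate $t$, so a case split with $3 \le t \le t_0$ handled directly and $t > t_0$ handled by the main argument is expected, with $t_0$ and all internal parameters tuned so that the constant $1.721$ is admissible throughout.
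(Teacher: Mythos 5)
Your overall skeleton for achieving the $O(\log t/\log\log t)$ shape is correct and matches the paper: approximate $\zeta(1+it)$ by a finite sum, split at a breakpoint $X$ with $\log X\asymp\log t/\log\log t$, bound the initial segment trivially via the harmonic sum, and handle the tail with van der Corput $k$-th derivative tests where $k$ grows like $\log\log t$. Two details differ in kind from what the paper does, and one of them is a genuine misstep.

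First, a minor but meaningful difference: the paper starts not from Euler--Maclaurin with $N\asymp t$ but from an explicit Riemann--Siegel approximation (Patel's Theorem~3.9), so the main sum has length $n_1=\lfloor\sqrt{t/2\pi}\rfloor$ rather than $\asymp t$. This shortens the dyadic tail and keeps the constants under control; with $N\asymp t$ you would still get the right exponent but the explicit $\alpha_k$-type contributions become harder to tame. This is why the paper's Discussion already notes that the Riemann--Siegel route ``cuts the main term by half'' at the $k=2$ level and is the preferred starting point.

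Second, and more importantly: the Borel--Carath\'eodory step you propose to ``sharpen the constant to $1.721$'' is not how the paper proves Theorem~\ref{main theorem}, and it would not produce the claimed constant. In the paper, Borel--Carath\'eodory appears only in the proof of Theorem~\ref{main theorem 1} (for $1/\zeta$ and $\zeta'/\zeta$); Theorem~\ref{main theorem} is proved by a purely real-variable exponential-sum argument. The constant $1.721$ comes from three sources: (i) the uniform-in-$k$ van der Corput test of Lemma~\ref{CD_deriv_test} (from \cite{yang_explicit_2023}), whose $A_k,B_k$ satisfy recursions with a bounded fixed point, making $\sup_{k\ge 6}A(\eta_3,h,k)$ finite; (ii) careful numerical optimization of $\eta_3,h_1,h_2,t_0$ in \eqref{parameter choices}; and (iii) a separate treatment of a ``gap interval'' $t\in[\exp(16.01),\exp(705.64)]$ via Proposition~\ref{third_deriv_bound}, using hybrid $k=3,4,5$ derivative tests. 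Your proposed two-range case split ($3\le t\le t_0$ versus $t>t_0$) would not capture this: with $t_0=\exp(3069)$, the known bounds for moderate $t$ (e.g.\ $\frac12\log t+1.93$ or $\frac15\log t+44.02$) fail to be below $1.721\log t/\log\log t$ on a middle range, and without the new $\frac13\log t+O(1)$ and $\frac{8}{33}\log t+O(1)$ bounds of Proposition~\ref{third_deriv_bound}, the best achievable constant over the gap is about $2.539$. Finally, the paper explicitly flags in the Discussion that a $\log\zeta$ / zero-region approach (the kind your Borel--Carath\'eodory step would need) is ``constrained by the width of available explicit zero-free regions'' and so was not pursued for this bound; in particular, the large constants in Ford-type estimates and the $2r/(R-r)$ inflation from Borel--Carath\'eodory make that route numerically worse here, not better.
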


Additionally, using a different method,
we prove explicit upper bounds of the same form, 
on both $1/|\zeta(1+it)|$ and
$\left|\zeta'(1+it)/\zeta(1+it)\right|$.

\begin{theorem}\label{main theorem 1}
For $t \ge 3$, we have 
\begin{equation*}
\frac{1}{|\zeta(1 + it)|} \le 431.7 \frac{\log t}{\log \log t}.
\end{equation*}
Furthermore, for $t\ge 500$, if
\begin{equation*}
1 \le \sigma \le 1+ \frac{9}{250}\frac{\log\log t}{\log t},
\end{equation*}
then
\begin{equation*}
\left|\frac{\zeta'}{\zeta}(\sigma+it)\right| \le 154.5 \frac{\log t}{\log \log t}.
\end{equation*}
\end{theorem}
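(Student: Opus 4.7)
The plan is to apply the Borel-Carath\'eodory theorem to $\log\zeta(s)$ on a disk centered slightly to the right of $1+it$, leveraging the upper bound from Theorem~\ref{main theorem} and the classical Mertens inequality to convert upper-bound information about $|\zeta|$ into lower-bound information.

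Concretely, I set $s_0 = 1 + \eta + it$ with $\eta$ of order $\log\log t / \log t$. The first step is to establish a lower bound $|\zeta(s_0)| \gg \log\log t / \log t$ via the de la Vall\'ee Poussin inequality
\[
\zeta(\sigma)^3 |\zeta(\sigma+it)|^4 |\zeta(\sigma+2it)| \ge 1 \qquad (\sigma > 1),
\]
applied at $\sigma = 1 + \eta$. Here the factor $\zeta(1+\eta)^3$ contributes $(1/\eta + O(1))^3$, while Theorem~\ref{main theorem}, extended to $\sigma$ slightly above $1$ (say via $|\zeta(\sigma+iu)| \le \zeta(\sigma)$ or Phragm\'en--Lindel\"of), bounds $|\zeta(1+\eta+2it)|$ by $O(\log t / \log\log t)$. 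Solving for $|\zeta(s_0)|^4$ yields the claimed lower bound once $\eta$ is well chosen.

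The second step transfers this lower bound from $s_0$ to the point $1 + it$. Using an explicit zero-free region for $\zeta$, I fix a radius $R > \eta$ such that $\zeta$ is non-vanishing on $|s - s_0| \le R$. Setting $f(s) = \log\zeta(s) - \log\zeta(s_0)$, which is holomorphic on the disk with $f(s_0)=0$, I then bound $\Re f(s) = \log|\zeta(s)/\zeta(s_0)|$ from above on the boundary $|s-s_0|=R$: on the part with $\Re s \ge 1$ I apply Theorem~\ref{main theorem}, and on the part with $\Re s < 1$ (inside the zero-free region) I use convexity, e.g.\ Phragm\'en--Lindel\"of between $\sigma = 1$ and $\sigma = 1/2$. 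The Borel-Carath\'eodory theorem then yields
\[
|f(1+it)| \le \frac{2\eta}{R - \eta}\,\sup_{|s - s_0| = R}\Re f(s),
\]
and taking real parts and exponentiating gives the lower bound on $|\zeta(1+it)|$, hence the first inequality of Theorem~\ref{main theorem 1}.

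For the bound on $|\zeta'/\zeta(\sigma+it)|$, I exploit that $f'(s) = \zeta'/\zeta(s)$. A Cauchy estimate on a disk around $\sigma + it$ contained inside $D(s_0, R)$ converts the pointwise bound on $|f|$ from Borel-Carath\'eodory into a bound on $|f'|$ at $\sigma + it$. The restriction $\sigma \le 1 + 9\log\log t/(250\log t)$ in the theorem is precisely the range within which $\sigma + it$ stays close enough to $s_0$ for the Cauchy estimate to preserve the $\log t / \log\log t$ rate. The main obstacle throughout is the careful balancing of three competing constraints: $\eta$ must be large enough for the Mertens inequality to give a strong lower bound on $|\zeta(s_0)|$; $R$ must be large enough that the Borel-Carath\'eodory ratio $2\eta/(R-\eta)$ is a small constant; but $R$ is capped by the width of the available explicit zero-free region. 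Obtaining the specific constants $430.5$ and $154$ requires tight numerical optimization of $\eta$, $R$, and the inner Cauchy radius, together with careful bookkeeping of all error terms.
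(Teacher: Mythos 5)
Your toolbox (Borel--Carath\'eodory, the $3$--$4$--$1$ inequality at $\sigma=1+\eta$, an explicit zero-free region) is the right one, and your first step --- the lower bound $1/|\zeta(1+\eta+it)|\ll \eta^{-3/4}|\zeta(1+\eta+2it)|^{1/4}\ll \log t/\log\log t$ for $\eta\asymp\log\log t/\log t$ --- is exactly what the paper does. But both of your transfer steps lose a factor of $\log\log t$ and cannot deliver the stated rates. For the $1/\zeta$ bound: applying Borel--Carath\'eodory to $f=\log(\zeta(s)/\zeta(s_0))$ forces the outer radius $R$ to lie inside the zero-free region, so $R\le \eta+c_0\log\log t/\log t$ with $c_0=1/21.432$, whence the ratio $2\eta/(R-\eta)\ge 2d_1/c_0$ is a fixed positive constant (writing $\eta=d_1\log\log t/\log t$). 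Meanwhile $M=\sup_{|s-s_0|=R}\Re f$ is genuinely of size $\asymp\log\log t$, because $-\log|\zeta(s_0)|$ alone is $\ge\log\log t-\log\log\log t+O(1)$. So the leakage term is $\asymp d_1\log\log t$, which upon exponentiating multiplies your bound by $(\log t)^{c d_1}$; shrinking $d_1$ to suppress this degrades the factor $\eta^{-3/4}$, and optimizing leaves you with at best $O(\log t)$ or $O(\log t/(\log\log t)^{1/4})$, not $O(\log t/\log\log t)$. For the $\zeta'/\zeta$ bound the Cauchy estimate suffers the same way: $|f|\asymp\log\log t$ on a circle of radius at most $\asymp\log\log t/\log t$ gives only $|\zeta'/\zeta|\ll\log t$.

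The paper circumvents both losses with two ideas you are missing. First, it does not require $\zeta\ne 0$ on the whole working disk: via a Landau-type lemma (Lemma~\ref{log_deriv_zeta_lem1}) it controls $\zeta'/\zeta(s)-\sum_\rho 1/(s-\rho)$ on a disk of the much larger radius $r\asymp(\log\log t)^2/\log t$, and then discards the zero sum using the positivity of $\Re\sum_\rho 1/(s-\rho)$ in the half-plane guaranteed by the zero-free region (Lemma~\ref{log_deriv_zeta_lem2}); the quotient $\log A_1/r\asymp\log\log t\,/\,((\log\log t)^2/\log t)=\log t/\log\log t$ is exactly where the saving comes from. Second, the order of deduction is reversed relative to yours: the $\zeta'/\zeta$ bound is proved first, and the lower bound on $|\zeta|$ is transferred from $1+\delta_1+it$ to $1+it$ by integrating $\Re(\zeta'/\zeta)$ over the segment of length $\delta_1=d_1\log\log t/\log t$, which costs only the constant factor $e^{R_1d_1}$ rather than a power of $\log t$. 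Without these two ingredients the argument as you outline it stalls at the classical $O(\log t)$ level.
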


Theorem~\ref{main theorem} makes explicit a result due to Weyl \cite{weyl_zur_1921}
 (see also \cite[Section 5.16]{titchmarsh_theory_1986}),  
and is sharper than the prior explicit bounds due to Patel \cite{patel_explicit_2022} on $|\zeta(1 + it)|$ 
for $t$ in the range
$\exp(3382) \le t \le \exp(3.61\cdot 10^8)$.
\footnote{We also note that Theorem~\ref{main theorem} is sharper 
for $t\ge 23256$ than the bound $|\zeta(1+it)|\le \tfrac{3}{4}\log t$ given in \cite{trudgian_new_2014}.} 

As far as we know, the bounds in Theorem~\ref{main theorem 1} are the first unconditional explicit bounds of their kind. The first assertion of Theorem \ref{main theorem 1} supersedes \cite[Proposition A.2]{carneiro2022optimality} when $t\ge \exp(\exp(10.07))$, while the second assertion supersedes \cite[Table 2 with $(W,R_1) = (12,40.14)$]{trudgian2015explicit} for $t\ge \exp(\exp(3.85))$.

Under the assumption of the Riemann Hypothesis, sharper explicit estimates for $\zeta'/\zeta(1 + it)$ and $1/\zeta(1+it)$ of the order $\log\log t$ are known \cite{simonic_estimates_2022}, \cite[Theorem\ 5]{chirre_conditional_2022}, \cite{chirre_conditional_2023}. Much work has also been done on bounding more general $L$-functions \cite{lumley_explicit_2018}, \cite{palojarvi_conditional_2022}.

Lastly, it is reasonable to try to adapt the remarkable approaches in 
\cite[p.\ 984]{sound_2009} and \cite[Lemma 2.5]{lxs_2015} 
to our context. In these articles,
very precise bounds, conditional on the Riemann hypothesis, are obtained.
One can arrange for these approaches 
to yield unconditional bounds,
and the outcome is similarly constrained by the width of 
available explicit zeros-free regions. 
See \cite{li_2010}, for example, where the method from \cite{sound_2009} is used to establish bounds on $L$-functions for $\sigma \ge 1$.

\section{Discussion}
One can derive an explicit bound on $|\zeta(1+it)|$  
of the form  $\log t + \alpha_1$
using a version of the Euler--Maclaurin summation.
For example, using the version in \cite[Corollary\ 2]{simonic_explicit_2020},  
applied with $t_0=30\pi$ and $c = 1/4$, together with the elementary harmonic sum bound\footnote{Here, $\gamma=0.577216\ldots$ is the Euler constant.} (see Lemma \ref{harmonic sum} below),
\begin{equation}\label{harmonic bound}
    \sum_{n = 1}^N\frac{1}{n}\le \log N+\gamma+\frac{1}{2N},
\end{equation} 
yields after a small numerical computation, 
$|\zeta(1+it)|\le \log t -0.45$ for $t\ge 3$.

Patel~\cite{patel_explicit_2022} 
replaced the use of the Euler--Maclaurin summation
by an explicit version of the Riemann--Siegel formula on the $1$-line.  
Consequently, the size of the main term in the Euler--Maclaurin bound was cut
by half. Patel thus obtained
$|\zeta(1+it)|\le \frac{1}{2}\log t +1.93$ for $t\ge 3$,  
a substantial improvement at the expense of a larger constant term.

In general, we expect that an explicit van der Corput lemma of order $k$, 
where $k\ge 2$, 
will produce a bound of the form
\begin{equation}\label{k bound}
|\zeta(1+it)|\le \frac{1}{k}\log t +\alpha_k,\qquad(t\ge 3).
\end{equation}
One proceeds by approximating $\zeta(1+it)$ by an Euler--Maclaurin sum
$\sum_{n\ll t} n^{-1-it}$, or better yet by a Riemann--Siegel sum $\sum_{n\ll \sqrt{t}} n^{-1-it}$.
In either case, roughly speaking, the term $(1/k)\log t$ arises from bounding the subsum 
 with $n\le t^{1/k}$ via the triangle inequality and
 the harmonic sum estimate \eqref{harmonic bound}, 
 while the constant
$\alpha_k$ predominantly arises from bounding the subsum with $n > t^{1/k}$ via 
van der Corput lemmas of order $\le k$. 
 
However, the $\alpha_k$ obtained using this approach appear to grow fast with $k$.
For example, in \cite{patel_explicit_2021}, $\alpha_5$ is about $23$-times larger than $\alpha_2$, 
which already suggests exponential growth.
Despite this, there is still gain in letting $k$ grow with $t$, provided the growth
is slow enough.
We let $k$ grow like $\log \log t$, which is permitted 
 by a recent optimized van der Corput test, uniform in $k$, derived in \cite{yang_explicit_2023}. 
Subsequently, we obtain
a bound on $|\zeta(1+it)|$ of the form $c_0 \log t/\log \log t$ with an explicit number $c_0$. 

While we could let $k$ increase faster than $\log \log t$, leading to a smaller 
contribution of the $(1/k)\log t$ term, 
the $\alpha_k$ thus obtained would likely take over as the main term, 
asymptotically surpassing $\log t/\log\log t$ in size. This limits 
the order of the van der Corput lemmas we employ.

In proving Theorem~\ref{main theorem}, we encounter an unexpected
``gap interval'', not covered by any of the other 
explicit estimates, which requires a special treatment.
Namely, the interval $I=[\exp(16), \exp(662)]$. 
To prove the inequality in Theorem~\ref{main theorem} for all $t\in I$, 
we use the following proposition.

\begin{proposition}\label{third_deriv_bound}
We have 
\[
|\zeta(1 + it)| \le \begin{cases}
\displaystyle    \frac{1}{3}\log t + 4.66,& t \ge \exp(16),\\
&\\
\displaystyle  \frac{8}{33}\log t + 12.45,& t \ge \exp(88).
\end{cases}
\]
\end{proposition}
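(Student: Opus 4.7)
My plan is to derive the three bounds along the lines laid out in the discussion after \eqref{k bound}. I would approximate $\zeta(1+it)$ by a truncated Dirichlet sum -- using either the Euler--Maclaurin formula of Lemma~\ref{cheng_prop_1} up to $N\asymp t$, or an explicit Riemann--Siegel formula up to $N\asymp\sqrt t$ -- then split this sum at some threshold $N_0=N_0(t)$. The short part $\sum_{n\le N_0}n^{-1-it}$ I would bound by the triangle inequality and the harmonic estimate \eqref{harmonic bound}, producing essentially $\log N_0+\gamma+1/N_0$. The tail $\sum_{N_0<n\le N}n^{-1-it}$ I would decompose into dyadic intervals $M<n\le 2M$ and, after partial summation to strip off the factor $1/n$, apply an explicit van der Corput bound to each block.

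For the first two inequalities, the coefficient $\tfrac{1}{3}\log t$ dictates the choice $N_0=t^{1/3}$. The tail $M\ge t^{1/3}$ is then handled by a third-derivative test: with $f(x)=-(t/2\pi)\log x$ one has $|f^{(3)}(x)|\asymp t/x^4$, so $\lambda_3\asymp t/M^4$, and a bound of shape $M\lambda_3^{1/6}+M^{1/2}\lambda_3^{-1/6}$ divided by $M$ and summed dyadically is $O(1)$ throughout the relevant range. The two additive constants $4.664$ and $4.308$ should come from the \emph{same} inequality applied with the two different lower bounds $t_0\in\{\exp 16,\exp 82\}$: pushing $t_0$ up shrinks the lower-order perturbations (the Euler--Maclaurin remainder, the dyadic-boundary overlap and the $1/N_0$ term) and yields the sharper constant at the cost of a more restrictive range.

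For the third bound I would push the threshold further to $N_0=t^{8/33}$ and use a sharper explicit van der Corput estimate on the tail, e.g.\ a higher-order derivative test, or a composition of $A$- and $B$-processes in the uniform framework of \cite{yang_explicit_2023}; the coefficient $8/33$ is precisely what one obtains when the exponent pair $(\kappa,\lambda)$ used for the tail satisfies $\kappa/(1-\lambda)=8/33$, so that the tail contributes $O(1)$ and the harmonic piece contributes $(8/33)\log t+\gamma+o(1)$. The price for this smaller coefficient of $\log t$ is a larger bundle of error terms in the uniform van der Corput bound, each of which is only negligible once $t\ge\exp 93$, and this is what inflates the additive constant to $12.53$.

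The main obstacle is not conceptual but numerical. Each van der Corput application furnishes a principal term together with several auxiliary error terms, and each of these must be summed dyadically, combined with the truncation error of the initial approximation, and then balanced against the harmonic contribution $\log N_0$. To obtain clean constants of the size claimed ($4.664$, $4.308$ and $12.53$) without letting any one term dominate, one has to optimise the threshold $N_0$, the outer truncation $N$, the dyadic block sizes and the lower bound $t_0$ simultaneously; this bookkeeping, rather than any new analytic idea, is where the real work lies.
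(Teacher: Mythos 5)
Your plan follows the paper's strategy in all essentials for the first two bounds: the Riemann--Siegel approximation of Theorem~\ref{patel theorem}, the triangle inequality plus \eqref{harmonic bound} on an initial segment of length $\asymp t^{1/3}$, and an explicit third-derivative test on the tail, with the constants $4.664$ and $4.308$ obtained from one inequality at two parameter choices. Two points of divergence are worth recording. First, the paper does not cut the tail dyadically: it uses consecutive blocks of \emph{fixed} length $P=\lceil t^{1/3}\rceil$, applies Lemma~\ref{corput lemma} to the $q$-th block with $W=2\pi(q+1)^3P^3/t$ and $\lambda=(1+1/q)^3$, and exploits the convergent sum $\sum_{q\ge q_0}q^{-1}(q+1)^{-1/2}<2/\sqrt{q_0-1/2}$. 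This buys two things a dyadic split does not: $\lambda\to 1$, so the $\lambda$-dependent terms in \eqref{alpha beta formula} stay small, and a free integer parameter $q_0$ whose increase shrinks the tail constant at the cost of only $\log q_0$ in the harmonic piece. Indeed the two constants come from $(t_1,\eta,q_0)=(\exp(16),1.304,5)$ and $(\exp(82),1.028,4)$, so $q_0$ and $\eta$ move as well as $t_1$, not $t_1$ alone as you suggest. Second, note the slip $|f'''(x)|\asymp t/x^4$: for $f(x)=(t/2\pi)\log x$ one has $|f'''(x)|=t/(\pi x^3)$, and it is precisely the $t/M^3$ scaling that forces the threshold $t^{1/3}$; with $t/M^4$ the third-derivative test would already be admissible from $t^{1/4}$ onward, which is false.

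For the third bound your outline is directionally right but omits the two devices that actually produce $12.53$. The paper keeps the length-$P$ blocks for $n\ge q_0P\asymp t^{1/3}$ and refines the \emph{initial} segment by applying the $k=4$ and $k=5$ tests of Lemma~\ref{CD_deriv_test} on $(Z_5,Z_4]$ and $(Z_6,Z_5]$ with $Z_4\asymp t^{1/3}$, $Z_5\asymp t^{4/13}$ and $Z_6\asymp t^{8/33}$; only $n\le t^{8/33}$ is estimated trivially, and $8/33=\theta_6/2$ is inherited from the fifth-derivative test, consistent with your exponent-pair heuristic. In addition, the conjugate sum $\sum_{n\le n_1}n^{it}$ in Theorem~\ref{patel theorem}, bounded trivially by $n_1$ in the first two cases, is shown to be $O(t^{5/12})$ via the same block decomposition, so its contribution to $|\zeta(1+it)|$ drops from $O(1)$ to $O(t^{-1/12})$. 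Without this last refinement, which your plan does not mention, the additive constant in the third case would be noticeably larger than $12.53$.
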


To clarify the need for Proposition~\ref{third_deriv_bound}, 
consider that 
if one inputs the bounds in \cite{patel_explicit_2022}, as below, one finds
\[
\max_{t\in I} \frac{\displaystyle\min\left(\log t, \frac{1}{2}\log t + 1.93, \frac{1}{5}\log t + 44.02\right)}{\log t / \log \log t} \ge 2.539,
\]
and the maximum of this ratio occurs around $t_1 = \exp(140.3)$. 
On the other hand, the savings we otherwise  establish here 
may materialize only when $t$ is well past $t_1$.
Therefore,  without the improvement enabled by Proposition~\ref{third_deriv_bound}, 
we cannot do any better than the bound
$|\zeta(1 + it)| \le 2.539 \log t/\log \log t$ over $t\in I$. 
Proposition~\ref{third_deriv_bound}, which is based on a hybrid of van 
der Corput lemmas of orders $k=3$, $4$ and $5$, allows us to circumvent this barrier.

In retrospect, a gap interval like $I$ should be expected. This is because  
the estimates in \cite{patel_explicit_2021} rely, essentially, on van der Corput lemmas 
of orders $k=2$ and $k=5$.\footnote{The bound obtained via 
the Riemann--Siegel formula in \cite{patel_explicit_2021} 
should correspond to a $k=2$ van der Corput lemma.} In particular, 
bounds that would follow from using van der Corput lemmas 
of orders $k=3$ and $k=4$ 
are not utilized, which presumably causes the gap.

\section{Proof of Theorem~\ref{main theorem}}\label{Theorem 1 proof}

Let $t_0$ be a positive number to be chosen later, subject to $t_0\ge \exp(990/7)$. 
We suppose $t \ge t_0$ throughout this section. 
(We will use another method for $t < t_0$.) 

Theorem~\ref{patel theorem} supplies 
a Riemann--Siegel approximation of $\zeta(1+it)$ in terms of two sums, plus a remainder term
$\mathcal{R}$.
We use the triangle inequality to bound the entire second sum in this theorem. This gives
\begin{equation}\label{zeta bound 1}
    \begin{split}
        |\zeta(1+it)|&\le \left|\sum_{n=1}^{n_1} \frac{1}{n^{1+it}}\right|
+\frac{g(t)}{\sqrt{2\pi}}+\mathcal{R}(t),\qquad n_1=\lfloor \sqrt{t/(2\pi)}\rfloor.
    \end{split}
\end{equation}
Since $g(t)$ and $\mathcal{R}(t)$ are well-understood, 
the bulk of the work is in bounding the main sum over $n\le n_1$.

We introduce two sequences, $\{X_m\}_{0\le m\le M}$ and $\{Y_k\}_{3\le k\le r+1}$. 
The former sequence $X_m$ will be used to partition the main sum 
in \eqref{zeta bound 1} in a dyadic manner. 
The latter sequence $Y_k$ will be used to easily bound the partition points $X_m$ 
 from above and below  
solely in terms of $t$ (or, what is essentially the same, in terms of $n_1$).

To this end, let $h$ and $h_2$ be real positive numbers, 
also to be chosen later, subject to $1 < h \le 2$ and $h_2 < 1/\log 2$.
We use $h_2$ immediately in defining
\begin{equation*}
    \begin{split}
        &R := 2^{r - 1},\qquad 
        r := \lfloor h_2\log\log t \rfloor.
    \end{split}
\end{equation*}
Furthermore we will assume that $h_2$ and $t_0$ are chosen so that 
\begin{equation}\label{r0_assumption}
r_0 := \lfloor h_2 \log\log t_0 \rfloor \ge 6. 
\end{equation}
Since $t\ge t_0$ by supposition, $r\ge r_0$. The integer $r$ will correspond to the highest order van der Corput lemma 
we use in bounding the tail of the main sum in \eqref{zeta bound 1}. 

Furthermore, let
\begin{equation*}
\begin{split}
    &K := 2^{k - 1},\qquad \theta_k := \frac{2K}{(k-2)K + 4},\qquad (k \ge 3).
\end{split}
\end{equation*}
In the definition of $\theta_k$, $K$ is determined by $k$. 
Also, $\theta_k$ 
decreases monotonically to zero with $k\ge 3$, 
and is bounded from above by $\theta_3=1$.

We now define the previously mentioned sequences $\{X_m\}$ and $\{Y_k\}$. Set
\begin{equation*}
    \begin{split}
        &X_0 := \frac{1}{\sqrt{2\pi}}t^{1/2},\qquad X_m := h^{-m}X_0,\qquad (m \ge 1).
    \end{split}
\end{equation*}
So, $\lfloor X_0\rfloor = n_1$ and since $h>1$, $X_m$ decreases monotonically with $m$. 
Also set
\begin{equation}\label{Yk_defn}
    Y_k := X_0^{\theta_k},\qquad (k \ge 3),
\end{equation}
so that $Y_k$ decreases monotonically with $k\ge 3$, 
starting at $Y_3=X_0$. Note that
if $k$ is large, then $\theta_k\approx 2/k$, 
and so $Y_k \approx t^{1/k}$ for large $k$.
Lastly, define
\begin{equation*}
    M := \left\lceil \frac{(1 - \theta_{r + 1})\log X_0}{\log h} \right\rceil.
\end{equation*}
The integer $M$ corresponds to the number of points we will use to partition the main sum. From the definition, we have
\begin{equation}\label{M inequality}
  \frac{(1 - \theta_{r + 1})\log X_0}{\log h} \le  M < \frac{(1 - \theta_{r + 1})\log X_0}{\log h}+1.
\end{equation}
Thus, our choice of $M$ ensures that
\begin{equation}\label{XM_bound}
X_{M} \le X_0^{\theta_{r+1}}=Y_{r+1} < X_{M - 1} \le h X_0^{\theta_{r+1}}.
\end{equation}
Additionally, we have
\begin{equation}\label{XM_bound 2}
    X_M \le Y_{r + 1} < Y_r < \cdots < Y_4 < Y_3 = X_0.
\end{equation}

The assumption \eqref{r0_assumption} is required for the subsequent analysis to be valid. 
This condition implies, among other things,  
that $\theta_{r+1}<1$ and, hence, $M\ge 1$. 
This in turn ensures that some quantities appearing later 
(e.g.\ $M-1$) are nonnegative. 
%
All contingent conditions we impose 
 (including on $r_0$, $h$, $h_2$ and $t_0$) will be verified at the end, after choosing
the values of our free parameters.

By the chains of inequalities \eqref{XM_bound} and \eqref{XM_bound 2}, 
for all $m\in [1, M-1]$, $Y_{r+1} < X_m < Y_3$. 
Therefore, since the sequence $Y_{r+1},\ldots,Y_3$ partitions the interval 
$[Y_{r+1},Y_3]$, for each 
integer $m \in [1, M - 1]$, there is a unique integer $k \in[3, r]$ such that
\begin{equation}\label{mk dependence}
    X_m \in (Y_{k + 1}, Y_k].
\end{equation}
Note, though, that multiple $m$'s could correspond to the same $k$.

With this in mind, let us denote
\begin{equation}\label{Sm def}
S_m := \sum_{X_m + 1 < n \le X_{m - 1}}\frac{1}{n^{1 +it}} = \sum_{\lfloor X_m\rfloor + 1 < n \le \lfloor X_{m - 1}\rfloor}\frac{1}{n^{1 +it}},
\end{equation}
and observe that
\begin{equation}\label{Sm sum bound}
\left|\sum_{X_{M - 1} < n \le X_0}\frac{1}{n^{1 + it}}\right| \le \sum_{m = 1}^{M - 1}\left|\frac{1}{(\lfloor X_m\rfloor + 1)^{1 + it}} + S_m\right| \le \sum_{m = 1}^{M - 1}\left(\frac{1}{X_m} + |S_m|\right).
\end{equation}
We calculate
\begin{equation}\label{XM_XM1_ratio0}
\frac{\lfloor X_{m - 1}\rfloor}{\lfloor X_{m}\rfloor + 1} 
< \frac{X_{m - 1}}{X_m} = h.
\end{equation}
Here, we used the elementary inequality $x-1< \lfloor x \rfloor \le x$, valid for any real number $x$
together with the definition $X_m = h^{-m}X_0$.

We bound $S_m$ using Lemma~\ref{CD_deriv_test} with $a = \lfloor X_m\rfloor + 1$ and $b = \lfloor X_{m - 1} \rfloor \le ha$.
For each integer $m \in [1, M - 1]$ such that $X_m \in (Y_{k + 1}, Y_k]$, 
and with $K=2^{k-1}$, we have, for any choice of the free parameter $\eta > 0$,  
\begin{equation}\label{CkDk_bound_11}
\begin{split}
\left|S_m\right| 
&\le C_k(\eta, h) (\lfloor X_{m} \rfloor + 1)^{-k/(2K - 2)}t^{1/(2K - 2)} \\
&\qquad + D_k(\eta, h)(\lfloor X_m\rfloor + 1)^{k/(2K - 2)- 2/K}t^{-1/(2K - 2)}.
\end{split}
\end{equation}

To bound the RHS, observe that 
\begin{equation}\label{second term X_m bound}
Y_{k + 1} < \lfloor X_m\rfloor + 1 \le X_m + 1 = X_m \left(1 + \frac{1}{X_m}\right) \le Y_k\left(1 + \frac{1}{X_m}\right).
\end{equation}
In addition, $X_m \ge X_{M - 1} > X_0^{\theta_{r + 1}}$. Moreover, it follows by definition that 
\begin{equation}\label{theta lb 1}
    \theta_{r+1} = \frac{2}{r-1+2^{2-r}}.
\end{equation}
Since $\theta_{r+1}$ decreases continuously with $r$, 
and $r\le h_2\log\log t$, we have $X_0^{\theta_{r + 1}} = \exp(\theta_{r + 1}\log X_0) \ge \kappa(h_2, t)$ where 
\begin{equation}\label{const term bound 457}
\kappa(h_2, t) := \exp\left(\frac{\log (t/(2\pi))}{h_2\log \log t -1+2^{2-h_2\log\log t}}\right).
\end{equation} 
It is straightforward to verify that $\kappa(h_2, t)$ is increasing in $t$ for $t \ge t_0$, so $\kappa(h_2, t) \ge \kappa(h_2, t_0)$. Inserting \eqref{second term X_m bound} and \eqref{const term bound 457} into \eqref{CkDk_bound_11},
\begin{equation}\label{CkDk_bound_1}
\begin{split}
\left|S_m\right| &\le C_k(\eta, h)Y_{k + 1}^{-k/(2K - 2)}t^{1/(2K - 2)} \\
&\qquad + \widehat{D}_k(\eta, h, h_2, t_0) Y_{k}^{k/(2K - 2)- 2/K}t^{-1/(2K - 2)}
\end{split}
\end{equation}
for $t \ge t_0$, where 
\begin{equation}\label{D hat defn}
\widehat{D}_k(\eta, h, h_2, t_0) := D_k(\eta, h)\left(1 + \frac{1}{\kappa(h_2, t_0)}\right)^{k/(2K - 2)- 2/K}.
\end{equation}
This is permissible since the exponents of $\lfloor X_m\rfloor + 1$ in the first and second terms of \eqref{CkDk_bound_11} respectively satisfy
\begin{equation}\label{exponent bound}
-\frac{k}{2K - 2} < 0,\qquad\text{and}\qquad \frac{k}{2K - 2} - \frac{2}{K} \ge 0,\qquad (k \ge 3).
\end{equation}
It is worth clarifying that the right-hand side of \eqref{CkDk_bound_1} depends on $m$
via the requirement \eqref{mk dependence}.

Next, we express the bound on $S_m$ in \eqref{CkDk_bound_1} solely in terms of $t$.
Starting with the first term, we have
\begin{align}\label{Yk_bound}
Y_{k + 1}^{-k/(2K - 2)}t^{1/(2K - 2)} = (2\pi)^{\theta_{k+1}k/(4K - 4)}t^{-\rho(k)},
\end{align}
where after a quick calculation, 
\begin{equation}\label{rho def}
    \rho(k):=\frac{K-2}{K-1}\cdot \frac{1}{2(k-1)K+4}.
\end{equation}
We estimate the second term in \eqref{CkDk_bound_1} similarly, yielding
\begin{equation}\label{Yk_bound1}
\begin{split}
    Y_{k}^{k/(2K - 2)- 2/K}t^{-1/(2K - 2)} &< (2\pi)^{-\theta_k (k/(4K - 4)-1/K)} t^{-\frac{K}{K-1}\cdot\frac{1}{2(k-2)K+4}} \\
    & < (2\pi)^{-\theta_k (k/(4K - 4)-1/K)} t^{-\rho(k)}.
\end{split}
\end{equation}
The exponent of $2\pi$ in the estimate \eqref{Yk_bound}
may be bounded for $k> 5$ via $\theta_{k+1}\le 32/81$.
Additionally, recalling
the second inequality of \eqref{exponent bound}, we may
bound the contribution of the $2\pi$-factor
in the estimate \eqref{Yk_bound1}
simply by $1$. This motivates us to define
\begin{equation}\label{A def}
     A(k) := 
     \begin{cases}
     (2\pi)^{\theta_{k+1}\frac{k}{4(K - 1)}}C_k+ (2\pi)^{-\theta_k (\frac{k}{4(K - 1)}-\frac{1}{K})} \widehat{D}_k, & 3\le k\le 5, \\
     (2\pi)^{\frac{8k}{81(K - 1)}} C_k + \widehat{D}_k, & 6 \le k.
     \end{cases}
\end{equation}
Note that $A(k)$ additionally depends on $\eta$, $h$, $h_2$ and $t_0$. Assembling \eqref{CkDk_bound_1}, \eqref{Yk_bound}, \eqref{Yk_bound1}
and \eqref{A def},
we obtain that if $1\le m\le M-1$ and $k$ is determined via
the requirement \eqref{mk dependence}, then
\begin{equation}\label{Sm bound}   
    |S_m| <  A(k) \,t^{-\rho(k)}.
\end{equation}

We now divide the argument into two cases: $3 \le k \le 5$ and $k > 5$. In the former case, let $N_k$ denote the number of sums $S_m$ corresponding to each integer $k\ge 3$. 
In other words, $N_k$ is the number of $m$ such that $X_m\in (Y_{k + 1}, Y_k]$.
We observe that $X_m \in (Y_{k + 1}, Y_k]$
if and only if $M_k \le m < M_{k + 1}$, where 
\begin{equation}\label{Mk defn}
M_k := \frac{(1 - \theta_k)\log X_0}{\log h}.
\end{equation}
Therefore, by the bound on $|S_m|$ in \eqref{Sm bound},
\begin{equation}\label{k 3 4 bound}
    \sum_{1\le m<M_6}|S_m| < \sum_{k = 3}^{5}N_k\, A(k)\, t^{-\rho(k)}.
\end{equation}
By counting the maximum number of integers in a continuous interval, for 
$k\ge 3$,
\begin{equation}\label{Nk_estimate}
N_k \le \frac{(\theta_k - \theta_{k + 1})\log X_0}{\log h} + 1 \le \phi_k \log t,
\end{equation}
where
\begin{equation}
   \phi_k=\phi_k(h, t_0):= \frac{\theta_k - \theta_{k + 1}}{2\log h} + \frac{1}{\log t_0}.
\end{equation}
Thus, in view of \eqref{k 3 4 bound} we obtain 
\begin{equation}\label{B0_bound}
 \sum_{1\le m<M_6}|S_m| \le b_0,
\end{equation}
where by \eqref{Nk_estimate} and on explicitly computing the values of $\rho(k)$ for $3\le k\le 5$,  
\begin{equation}\label{B0 def}
\begin{split}
    b_0=b_0(\eta, h, h_2, t_0) &:= A(3) \phi_3\frac{\log t_0}{t_0^{1/30}} + A(4)\phi_4\frac{\log t_0}{t_0^{3/182}} + A(5)\phi_5\frac{\log t_0}{t_0^{7/990}}.
\end{split}
\end{equation}
Here, we used the supposition $t\ge t_0\ge \exp(990/7)$ which ensures 
that $\log t/t^{\rho(k)}$ is decreasing for $t\ge t_0$, with $3\le k\le 5$.

Next, for $k > 5$, we use a tail argument that removes the dependence of $A$ on $k$ in the bound \eqref{Sm bound}. Define 
\begin{equation}\label{A0 def}
A_0=A_0(\eta, h, h_2, t_0) := \sup_{k \ge 6}A(k).
\end{equation}
We will bound $A_0$ explicitly 
later (Section \ref{A0 A2 calc}), after determining $\eta$, $h$, $h_2$ and $t_0$.
Importantly, under our assumptions, it will transpire that $A_0$ is finite. 

In the meantime, we observe that $\rho(k)$ is continuously decreasing in $k\ge 3$ (towards zero), 
as can be seen from computing the derivative of $\rho(k)$ with respect to $k$. 
So, by simply counting the number of terms, 
and noting that as $m$ ranges over $[M_6,M-1]$, 
$k\le r$, we obtain from \eqref{Sm bound},
\begin{equation}\label{A0 bound}
\begin{split}
\sum_{M_6 \le m \le M - 1}|S_m| \le |M - M_6| A_0\, t^{-\rho(r)} \le A_1 \,
\frac{|b_1 - \theta_{r + 1}|}{t^{\rho(r)}}\log t,
\end{split}
\end{equation}
where, from \eqref{M inequality} and \eqref{Mk defn},
\begin{equation*}
A_1=A_1(\eta, h, h_2, t_0) := \frac{1}{2}\cdot\frac{A_0}{\log h}, \qquad b_1 = b_1(h,t_0):= \theta_6 + \frac{2\log h}{\log t_0}.
\end{equation*}
We want to further simplify the bound on the right-side of \eqref{A0 bound}. 
Via \eqref{theta lb 1} and a routine calculation,
\begin{equation}\label{theta r+1 bound}
    \frac{2}{r-1+2^{2-r_0}}\le \theta_{r+1} < b_1,\qquad (r\ge r_0 \ge 6).
\end{equation}
Hence, utilizing the inequality $r\le h_2\log\log t$
 as well as $\log(u)\le u - 1$, valid for any real number $u > 0$, yields
that if $r\ge 6$, then
\begin{equation}\label{A2 bound}
    \begin{split}
        \frac{b_1 - \theta_{r + 1}}{t^{\rho(r)}} \le \exp(b_1 -\theta_{r+1}-1-\rho(r)\log t)
        \le \frac{1}{A_2}\frac{1}{\log t},
    \end{split}
\end{equation}
where (identifying $x$ with $\log t$ below)
\begin{equation}\label{A2 def}
    A_2 = A_2(h, h_2,t_0) := \inf_{x \ge \log t_0}\frac{1}{x}\exp\left(1 - b_1 + \frac{2}{h_2\log x-1+2^{2-r_0}}
    +x\,\rho(h_2\log x)\right).
\end{equation}
In indicating the dependencies of $A_2$, we used that 
$r_0$ is determined by $h_2$ and $t_0$, and $b_1$ further depends on $h$. 
Now, from the definition of $\rho$, and using 
$$2^{h_2\log x}= x^{h_2\log 2},$$ 
we obtain\footnote{By the lower bound on $h_2$ in \eqref{r0_assumption}, we have $x^{h_2\log 2}\ge 64$ for $x\ge \log t_0$. So, $\rho(h_2\log x)$ is well-defined for $x\ge \log t_0$.}
\begin{equation}
   \rho(h_2\log x)= \frac{x^{h_2\log 2}-4}{x^{h_2\log 2}-2}\cdot \frac{1}{(h_2\log x-1)x^{h_2\log 2}+4}.
\end{equation}
Since\footnote{This step is where a slow enough growth on $r$ is utilized, to ensure that 
$(\log t)^{h_2\log 2} = o(\log t)$ as $t\to \infty$, so that the constant $A_2(h_2,t_0)$ defined in \eqref{A2 def} is positive.}
by supposition $0 < h_2 < 1/\log 2$, 
the quantity defining $A_2$ tends to $+\infty$ as $x\to +\infty$.
As this quantity is continuous and positive for $x\ge \log t_0$,
$A_2$ is finite and positive for all admissible $h_2$. From \eqref{A0 bound} and \eqref{A2 bound} we conclude that 
\begin{equation}\label{a1a2 bound tail}
\sum_{M_6 \le m \le M - 1}|S_m| \le \frac{A_1}{A_2}. 
\end{equation}
Lastly, to bound the remaining terms in \eqref{Sm sum bound}, in view of \eqref{M inequality}, we have the rough estimate 
\[
\sum_{m = 1}^{M - 1}\frac{1}{X_m} = \frac{1}{X_0}\sum_{m = 1}^{M - 1}h^{m} = \frac{1}{X_0}\frac{h^{M} - h}{h - 1} < \frac{X_0^{1 - \theta_{r + 1}} - 1}{X_0}\frac{h}{h - 1} < \frac{h}{h - 1}\frac{1}{X_0^{\theta_{r + 1}}}.
\]
Since $r \le h_2\log\log t$ and $\theta_{r + 1} > 2/r$, we have, for $t \ge t_0$,
\begin{equation}\label{extra terms estimate}
\sum_{m = 1}^{M - 1}\frac{1}{X_m} < \frac{h}{h - 1}\exp\left(-\frac{\log (t_0/(2\pi))}{h_2\log\log t_0}\right) =: E(h, h_2, t_0),
\end{equation}
say.

In summary, combining \eqref{Sm sum bound}, \eqref{B0_bound}, \eqref{a1a2 bound tail} and \eqref{extra terms estimate}, the tail sum satisfies the inequality
\begin{equation}\label{tail bound}
\begin{split}
\left|\sum_{X_{M-1} < n \le X_0}\frac{1}{n^{1 +it}}\right| &\le \frac{A_1(\eta, h, h_2,t_0)}{A_2(h, h_2,t_0)} + b_0(\eta, h, h_2, t_0) + E(h, h_2, t_0).
\end{split}
\end{equation}
So, for such $r$, the tail sum is bounded by a constant independent of $t$.

It remains to bound the initial sum over $1\le n \le X_{M - 1}$. We use the triangle inequality followed by the harmonic sum bound \eqref{harmonic bound}, 
which yields
\begin{equation}\label{initial bound 0}
\begin{split}
\left|\sum_{1 \le n \le X_{M - 1}}\frac{1}{n^{1 + it}}\right| &\le \log X_{M - 1} + \gamma + \frac{1}{ 2(X_{M-1}-1)}.
\end{split}
\end{equation}
We use the inequality \eqref{XM_bound} to bound $X_{M - 1}$ from above and below,
so that the right-side of \eqref{initial bound 0} satisfies
\begin{equation}\label{initial bound 1}
\begin{split}
\le \frac{\theta_{r + 1}}{2}\log \frac{t}{2\pi} + \log h + \gamma + \frac{1}{2(X_0^{\theta_{r + 1}}-1)}.
\end{split}
\end{equation}
Since $\theta_{r+1} < 2/(r-1)$ by \eqref{theta lb 1}, the first term in \eqref{initial bound 1} is bounded by
\begin{equation}\label{theta bound}
\frac{\log (t/(2\pi))}{r - 1} < \frac{\log t - \log 2\pi}{h_2\log\log t - 2}\le \frac{1 - \frac{\log 2\pi}{\log t_0}}{h_2 - \frac{2}{\log\log t_0}}\frac{\log t}{\log\log t},
\end{equation}
where the last inequality is due to the fact that $(1 - \frac{\log 2\pi}{x})/(h_2 - \frac{2}{\log x})$ is decreasing for $x \ge 990/7$ if $h_2 < 1/\log 2$. 

In addition, we use \eqref{const term bound 457} to bound the last term. Putting this together with \eqref{initial bound 0}, \eqref{initial bound 1} and \eqref{theta bound}, we obtain
\begin{equation}\label{initial bound}
\left|\sum_{1 \le n \le X_{M - 1}}\frac{1}{n^{1 + it}}\right| 
< A_3\frac{\log t}{\log\log t},
\end{equation}
where 
\begin{equation*}\label{A3 def}
\begin{split}
     A_3= A_3(h, h_2, t_0)&:= \frac{1 - \frac{\log 2\pi}{\log t_0}}{h_2 - \frac{2}{\log\log t_0}} + \left(\log h + \gamma + 
\frac{1}{2(\kappa(h_2,t_0)-1)}\right)\frac{\log\log t_0}{\log t_0}.
\end{split}
\end{equation*}

Lastly, the bound on  
$\zeta(1 + it)$ in \eqref{zeta bound 1} 
carries an extra term $g(t)/\sqrt{2\pi}+\mathcal{R}(t)$. 
Using the definitions in Theorem~\ref{patel theorem}, 
this term is easily bounded by $g(t_0)/\sqrt{2\pi} + \mathcal{R}(t_0)$. Therefore, combining this with \eqref{tail bound} and \eqref{initial bound}, 
 we see that 
\[
|\zeta(1 + it)| \le \left|\sum_{1 \le n \le n_1}\frac{1}{n^{1+it}}\right| + \frac{g(t_0)}{\sqrt{2\pi}} + \mathcal{R}(t_0) \le 
A_4\frac{\log t}{\log\log t},
\]
for $t \ge t_0$, where 
\[
A_4= A_4(\eta, h, h_2, t_0):=  A_3+ \left(\frac{A_1}{A_2} + b_0 + E + \frac{g(t_0)}{\sqrt{2\pi}} +\mathcal{R}(t_0)\right)\frac{\log\log t_0}{\log t_0}.
\]

We choose the following values for our free parameters,
which are suggested by numerical experimentation.
\begin{equation}\label{parameter choices}
\eta = 1.2364,\qquad h = 1.0224,\qquad h_2 = 0.85532,\qquad t_0 = \exp(3381).
\end{equation}
With these choices, 
our preconditions on $t_0$ and $h_2$ hold,
and the conditions $h >1$ and $r_0 \ge 6$ also hold (in fact, $r_0 = 6$).
Additionally, in Section \ref{A0 A2 calc}, we show that
\begin{align*}
A_0 &= 0.0976483443\ldots,\\
A_2 &= 0.0618341521\ldots.
\end{align*}
Therefore, we can explicitly compute 
\begin{align*}
b_0 &\le 4.629\cdot 10^{-8},\\
A_1 &\le 2.2039724975\ldots,\\
A_3 &\le 1.6420608139\ldots,\\
A_4 &\le 1.7301296329\ldots.
\end{align*}
In summary, the 
assertion of Theorem~\ref{main theorem} holds when $t\ge t_0=\exp(3381)$.

For smaller $t\le t_0$ we use the bound in Patel \cite{patel_explicit_2022}
and Proposition~\ref{third_deriv_bound}. Specifically, 
the bound in \cite[Theorem 1.1]{patel_explicit_2022} yields

\begin{equation*}
\begin{split}
&|\zeta(1 + it)| \le \frac{1}{5}\log t + 44.02 \le 1.731\frac{\log t}{\log \log t},\qquad \exp(662) \le t \le \exp(3381),\\
&|\zeta(1 + it)| \le \frac{1}{2}\log t + 1.93 \le 1.731\frac{\log t}{\log \log t},\qquad 3 \le t \le \exp(16).
\end{split}
\end{equation*}
This leaves the gap interval 
$\exp(16)\le t\le \exp(662)$ 
for which we invoke Proposition~\ref{third_deriv_bound}.
Since the gap interval falls in the range 
of $t$ where Proposition~\ref{third_deriv_bound} is applicable, 
our target bound is verified there as well.

\section{Bounding $A_0$ and $A_2$}\label{A0 A2 calc}

We derive an upper bound on $A_0(\eta, h, h_2, t_0)$, where the parameter values are given in \eqref{parameter choices}.  
For the convenience of the reader, we recall the definition
$A_0 := \sup_{k \ge 6}A(k)$, where for $k\ge 6$,
$$A(k) := (2\pi)^{k/(81K - 81)} C_k(\eta, h) + \widehat{D}_k(\eta, h, h_2, t_0).$$
With our choice of free parameter values in \eqref{parameter choices}, we 
numerically compute the following list of $A(k)$.
\begin{align*}
A(6) &= 0.0976483443\ldots,\qquad
A(7) =  0.0914388403\ldots,\\
A(8) &= 0.0882612019\ldots,\qquad
A(9) =  0.0865130096\ldots,\\
A(10) &=0.0855306439\ldots.
\end{align*}

The values of $C_k$ and $D_k$ required for the above computation 
are obtained using the formulas in Lemma~\ref{CD_deriv_test}.
In turn, evaluating $C_k(\eta, h)$ and $D_k(\eta, h)$ relies
on  evaluating $A_k(\eta,h^k)$ and $B_k(\eta)$.
When $k=3$,  $A_k$ and $B_k$ are computed directly
 using their definitions in Lemma~\ref{third_deriv_test}. 
 When $k\ge 4$, the recursive formulas
 in Lemma~\ref{kth_deriv_test} are used to compute them.
Let us examine these recursive formulas. 

First, the following quantity, which appears in
the formula for $A_{k+1}(\eta,\omega)$ in \eqref{s1_Ak_recursive_defn}, 
satisfies
\begin{equation}\label{K ratio bound}
    \max_{k\ge 10} \frac{2^{19/12}(K - 1)}{\sqrt{(2K - 1)(4K - 3)}} = \frac{2^{19/12}\cdot 511}{\sqrt{2092035}}=:\mu_{10}.
\end{equation}
Here, $K=2^{k-1}$ and the maximum occurs when $k=10$.
Another quanitity appearing in the formula for $A_{k+1}(\eta,\omega)$ is  $\delta_k$, 
which is clearly seen to decrease with $k$.
Noting that $h>1$, hence $\omega=h^{k+1} > 1$ 
for any $k\ge 0$, we deduce that 
\[
A_{k + 1}(\eta,\omega) \le \delta_{10}\left(1 + \mu_{10}A_k(\eta,\omega)^{1/2}\right).
\]
So, we are led to consider the discrete map 
$$x_{n + 1} = \delta_{10}\left(1 + \mu_{10} x_n^{1/2}\right).$$ 
By a routine calculation, this map has a single fixed point at
\begin{equation}
x^* = \left(\frac{\mu_{10}\delta_{10}}{2} + \sqrt{\frac{\mu_{10}^2\delta_{10}^2}{4} + \delta_{10}}\right)^2 = 2.7600429449\ldots,
\end{equation}
which is a stable point. Since $A_{k + 1}(\eta,\omega) \le x_{k + 1}$ for $k\ge 10$ 
and, by a direct numerical computation, $A_{10}(\eta, \omega) \le x^*$, 
it follows that 
\begin{equation}\label{Ak bound}
    A_{k}(\eta, h^k) \le x^*,\qquad (k \ge 10). 
\end{equation}

The analysis of $B_{k+1}$ for $k\ge 10$ proceeds similarly. The recursive 
formula for $B_{k+1}$ contains
the following quantity satisfying
\begin{equation}
\max_{k\ge 10}\frac{2^{3/2}(K - 1)}{\sqrt{(2K - 3)(4K - 5)}} \le \frac{2^{3/2}\cdot 511}{3\sqrt{231767}}=:\nu_{10}.
\end{equation}
So,
$B_{k+1}\le \delta_{10} \nu_{10} B_k^{1/2}$ for $k \ge 10$.
Consider the discrete map $y_{n + 1} = \delta_{10} \nu_{10} y_n^{1/2}$. 
This map has a single fixed point, namely, 
$$y^* = (\delta_{10} \nu_{10})^2 = 1.0023463404\ldots,$$ 
which is also stable.
Since $B_{k+1} \le y_{k+1}$ for $k\ge 10$ and, by a direct numerical computation, 
$B_{10} \le y^*$, it follows that 
\begin{equation}\label{Bk bound}
    B_k(\eta)\le y^*, \qquad (k \ge 10).
\end{equation}
 
Furthermore, we numerically verify that for $k \ge 10$,
\[
h^{2k/K - k / (2K - 2)}(h - 1)\left(\frac{(k - 1)!}{2\pi}\right)^{\frac{1}{2K - 2}} < 0.023,
\]
\[
h^{k/(2K - 2)}(h - 1)^{1 - 2/K}\left(\frac{2\pi}{(k - 1)!}\right)^{\frac{1}{2K - 2}} < 0.023.
\]
Combining this with \eqref{Ak bound} and \eqref{Bk bound}, 
and recalling the definitions in Lemma~\ref{CD_deriv_test} and \eqref{D hat defn}, yields 
the inequalities $C_k(\eta,h) < 0.023 x^*$ and  $\widehat{D}_k(\eta,h) < 0.024 y^*$
for $k\ge 10$. 
Consequently, $A(k) < 0.09$ for $k \ge 10$. 

Furthermore, in view of the values 
of $A(k)$ listed at the beginning, we also see that 
$A(k) \le A(6) = 0.09764\ldots$ for $6\le k\le 10$. 
We therefore conclude $A_0 = A(6)$.  

For the computation related to $A_2$, we shall use a derivative calculation to show that the function
\begin{equation}\label{A2 exponent}
    a_2(u) := 1-b_1+ e^u\,\rho(h_2 u)+\frac{2}{h_2u-1+2^{2-r_0}} -u
\end{equation}
is increasing in $u\ge u_0:=\log \log t_0$.
So, on identifying $u$ with $\log x$ in the definition of $A_2$ in \eqref{A2 def}, 
the value of $A_2$ is $\exp(a_2(u_0))$.

To this end,  we first calculate 
$$\frac{d}{du} e^u\,\rho(h_2 u) = e^u\rho(h_2 u)\left(1+\frac{d}{du} \log \rho (h_2u)\right), 
\qquad \frac{d}{du} \log \rho (h_2u)=-h_2 \tilde{a}_2(h_2 u),$$
where 
$$\tilde{a}_2(v) = \frac{(v-1)\log 2+1-2^{-v}(8(v-1)\log 2+6)+4^{-v}(8(v-2)\log 2+8)}
{(v-1+2^{2-v})(1-2^{1-v})(1-2^{2-v})}.$$
Note that $v$ is identified with $h_2u$, so we need only consider $v\ge h_2u_0=:v_0$. It is easy to see that 
$$\sup_{v\ge v_0} \tilde{a}_2(v) < \sup_{v\ge v_0} \frac{(v-1)\log 2+1}{(v-1+2^{2-v_0})(1-2^{1-v_0})(1-2^{2-v_0})}\le \log 2+0.2,$$
where the supremum occurs at $v=v_0$.
Hence, the derivative 
of $ e^u\,\rho(h_2 u)$ is at least $e^u\rho(h_2 u) (1-(\log 2+0.2)h_2)$ for $u\ge u_0$. 
Importantly, $1-(\log 2+0.2)h_2$ is positive. In comparison, the derivative of the remaining terms in the formula for $a_2(u)$ is
$$\frac{d}{du} \left(\frac{2}{h_2u-1+2^{2-r_0}} -u\right) = -\frac{2h_2}{(h_2 u -1+2^{2-r_0})^2}-1 \ge -1.04731\ldots,$$
since $u\ge u_0$ and $r_0 = 6$. Since 
\begin{equation}\label{condition1}
e^u\rho(h_2 u)(1-(\log 2+0.2)h_2) \ge 1.06112\ldots
\end{equation}
for $u\ge u_0$, the derivative of $a_2(u)$ is positive for $u\ge u_0$, proving our claim.

\section{Proof of Proposition~\ref{third_deriv_bound}}\label{Proposition proof}

The calculation is similar to that in \cite{hiary_explicit_2016} on the $1/2$-line, so we shall be brief in some details. 
We divide the main sum in \eqref{zeta bound 1} into pieces of length $\approx t^{1/3}$,
then apply the van der Corput Lemma~\ref{corput lemma} to each piece other than an initial segment, which
is bounded  by the triangle inequality.

Let $t_1$ be a positive number to be chosen later,
subject to $t_1\ge \exp(12)$. Suppose that $t\ge t_1$, unless otherwise stated, and let $n_1$ be as defined in \eqref{zeta bound 1}. 
Let $P =\lceil t^{1/3}\rceil$, $Q = \lfloor n_1/P\rfloor$, and let 
$q_0$ a positive integer to be chosen later, subject to $q_0\le Q$. 
Thus, we have
\begin{equation}\label{zeta bound 2}
    \begin{split}
        \left|\sum_{n=1}^{n_1} \frac{1}{n^{1+it}}\right|
&\le \sum_{n=1}^{q_0 P-1} \frac{1}{n}+
\sum_{q=q_0}^{Q-1} \left|\sum_{n=qP}^{(q+1)P-1} \frac{1}{n^{1+it}} \right|
+\left|\sum_{QP}^{n_1} \frac{1}{n^{1+it}}\right|.
    \end{split}
\end{equation}

The harmonic sum bound \eqref{harmonic bound} yields, subject to $q_0P> 1$, 
\begin{equation}\label{initial sum bound}
    \sum_{n=1}^{q_0 P-1} \frac{1}{n}\le \log (q_0 P-1)+\gamma+\frac{1}{2(q_0P-1)}.
\end{equation}

For $q_0\le q\le Q$, let $N = qP$ and let $N'$ be an integer 
such that  $N \le N' < N+P$. Partial summation gives
\begin{align}
\left|\sum_{n=N}^{N'}\frac{1}{n^{1+it}}\right| &\le \frac{1}{N} \max_{1 \le L \le P} |S_N(L)|,
\end{align}
where 
\[
S_N(L) := \sum_{n = 0}^{L - 1} \exp(2\pi i f(n)) \qquad
\text{and} \qquad f(x) = \frac{t}{2\pi}\log (N + x).
\]
We estimate $S_N(L)$ using Lemma \ref{corput lemma} with $r = q$, $K = P$ and 
\[
K_0 = t_1^{1/3} \le P. 
\]
This gives, for any positive $\eta$,
\begin{align*}
\frac{1}{N}\max_{1 \le L \le P} |S_N(L)|  
&\le \frac{1}{q}\sqrt{\frac{\alpha}{W^{1/3}} + \frac{\beta W^{1/3}}{P} + \frac{\alpha\eta}{P} + \frac{\eta\beta W^{2/3}}{P^2}}.
\end{align*}
where $W$, $\lambda$, $\alpha = \alpha(W, \lambda)$, $\beta = \beta(W)$ are defined in Lemma \ref{corput lemma}. 
We estimate the (nonnegative) terms under the square-root as follows. 
\begin{align*}
        &\frac{\alpha}{W^{1/3}} < \frac{\alpha}{\pi^{1/3}q}, 
        &\frac{\beta W^{1/3}}{P} &\le  \frac{\beta \pi^{1/3} (1 + 1/q_0) q}{t^{1/3}},\\
        &\frac{\alpha\eta}{P} \le \frac{\alpha\eta}{t^{1/3}}, 
        &\frac{\eta\beta W^{2/3}}{P^2} &\le \frac{\eta \beta \pi^{2/3} (1 + 1/q_0)^2 q^2}{t^{2/3}}.
\end{align*}
Isolating the first term  using the inequality $\sqrt{x + y} \le \sqrt{x} + \sqrt{y}$, valid for nonnegative $x$ and $y$, 
and using the inequality 
$q\le Q \le t^{1/6}/(2\pi)^{1/2}$
to bound $q$ in the remaining terms, therefore gives
\begin{equation} \label{SN_bound 0}
\frac{1}{N}\max_{1 \le L \le P} |S_N(L)|  
\le \frac{\alpha(W, \lambda)^{1/2}}{\pi^{1/6}q^{3/2}} + \frac{B(W, \lambda)}{q t^{1/12}},
\end{equation}
where, since $t\ge t_1$,
\[
B(W, \lambda) := \sqrt{\frac{1 + 1/q_0}{2^{1/2}\pi^{1/6}}\beta(W) + \frac{\eta}{t_1^{1/6}}\alpha (W, \lambda) + \frac{\eta(1 + 1/q_0)^{2}}{2\pi^{1/3} t_1^{1/6}}\beta(W)}.
\]
We also note that for all $q\ge q_0$ we have $W\ge W_0$ and $\lambda\le \lambda_0$, where
\begin{equation*}
    \begin{split}
        W_0 := \pi(q_0 + 1)^3,\qquad & \lambda_0 := \left(1 + \frac{1}{q_0}\right)^3.
    \end{split}
\end{equation*}
Therefore, for all $q$ under consideration, $\alpha(W) \le \alpha(W_0, \lambda_0)$ and $\beta(W) \le \beta(W_0)$.
It follows that for $q\ge q_0$, 
\begin{equation} \label{SN_bound}
\frac{1}{N}\max_{1 \le L \le P} |S_N(L)|  
\le \frac{\alpha_0^{1/2}}{\pi^{1/6}q^{3/2}} + \frac{B_0}{q t^{1/12}}.
\end{equation}
where $\alpha_0 := \alpha(W_0, \lambda_0)$ and $B_0 := B(W_0, \lambda_0)$.

Put together, we have
\begin{equation}\label{tail sum bound}
    \begin{split}
        \left|\sum_{n=q_0P}^{n_1} \frac{1}{n^{1+it}}\right|\le & 
        \frac{\alpha_0^{1/2}}{\pi^{1/6}}\sum_{q=q_0}^Q \frac{1}{q^{3/2}} + 
\frac{B_0}{t^{1/12}}\sum_{q=q_0}^Q \frac{1}{q}.
    \end{split}
\end{equation}
Moreover, from the argument in \cite[(3.1)]{hiary_improved_2022}, and using the bound $Q \le t^{1/6}/(2\pi)^{1/2}$ once again,
\begin{equation}\label{tail sum bound 1}
    \begin{split}
        &\sum_{q = q_0}^Q\frac{1}{q^{3/2}} \le \int_{q_0 - 1/2}^{Q + 1/2}\frac{\text{d}x}{x^{3/2}} < \frac{2}{\sqrt{q_0 - 1/2}},\\
        \sum_{q = q_0}^Q \frac{1}{q} &\le \int_{q_0 - 1/2}^{Q + 1/2}\frac{\text{d}x}{x} <
        \frac{1}{6}\log t + \log \tau_1 - \log\left(q_0 - 1/2\right),
    \end{split}
\end{equation}
where 
$$\tau_1 := \frac{1}{(2\pi)^{1/2}} + \frac{1}{2t_1^{1/6}}.$$
Therefore, observing that $\frac{1}{6}\frac{\log t}{t^{1/12}}$ is decreasing for $t\ge t_1\ge \exp(12)$,
and combining this with the initial sum bound \eqref{initial sum bound}, we obtain
\begin{align*}
\left|\sum_{n = 1}^{n_1}\frac{1}{n^{1+it}}\right| &< \log \left(q_0 P-1\right)+\gamma+\frac{1}{2(q_0P-1)} + \frac{\alpha_0^{1/2}}{\pi^{1/6}}\frac{2}{\sqrt{q_0 - 1/2}} \\
&\quad+ \frac{B_0}{t_1^{1/12}}\left(\frac{1}{6}\log t_1 + \log\tau_1 - \log\left(q_0 - 1/2\right)\right).
\end{align*}
We now estimate
\[
\log \left(q_0 P - 1\right) \le \log\left(q_0t^{1/3} + q_0 - 1\right) \le \frac{1}{3}\log t + \log\left(q_0  + (q_0 - 1)t_1^{-1/3}\right),
\]
which follows using $P< t^{1/3}+1$. Furthermore, 
\[
\frac{1}{2(q_0 P - 1)} \le \frac{1}{2\big(q_0t_1^{1/3} - 1\big)}.
\]
This yields 
\begin{equation}\label{zeta k=3 bound 1}
    |\zeta(1 + it)| \le \frac{1}{3} \log t + B_1(q_0)+ \frac{g(t_1)}{\sqrt{2\pi}} + 
\mathcal{R}(t_1),
\end{equation}
where
\begin{align*}
B_1(q_0) := &\, \gamma+ \log\left(q_0  + (q_0 - 1)t_1^{-1/3}\right)  + \frac{1}{2\big(q_0t_1^{1/3} - 1\big)}\\
&+\frac{\alpha_0^{1/2}}{\pi^{1/6}}\frac{2}{\sqrt{q_0 - 1/2}} + \frac{B_0}{t_1^{1/12}}\left(\frac{1}{6}\log t_1 + \log\tau_1 - \log\left(q_0 - 1/2\right)\right).
\end{align*}
Choosing $t_1 = \exp(16)$, $\eta = 1.267$ and $q_0 = 5$, we obtain
\begin{equation} \label{zeta k=3 bound 2}
|\zeta(1 + it)| \le \frac{1}{3}\log t + 4.66,\qquad t \ge \exp(16).
\end{equation}
This implies Theorem~\ref{main theorem} in the range $\exp(16) \le t \le \exp(88)$.

Consider now the second part of Proposition \ref{third_deriv_bound}. In this critical region, we make use of a few additional tools to sharpen our estimates. In particular, we employ a different method of bounding the second sum appearing in Theorem \ref{patel theorem}, leading to an estimate of size $O(t^{-1/12})$ instead of $O(1)$. Also, we employ the $k$-th derivative tests used in the proof of Theorem \ref{main theorem}, however this time we introduce additional scaling parameters to further optimise the switching points between derivative tests of different order. 

To this end, let $t_2$ be a real number
to be chosen later, subject to $t_2 \ge \exp(182/3)$.
Assume $t \ge t_2$.
Recall from \eqref{tail sum bound} and \eqref{tail sum bound 1} we showed for $q_0 \ge 1$, $P = \lceil t^{1/3}\rceil$ and $t \ge t_2$ that the tail sum
satisfies
\begin{equation}\label{tail sum bound 2}
\left|\sum_{n=q_0P}^{n_1} \frac{1}{n^{1+it}}\right|\le 
\frac{\alpha_0^{1/2}}{\pi^{1/6}}\frac{2}{\sqrt{q_0 - 1/2}} + \frac{B_0}{t_2^{1/12}}\left(\frac{1}{6}\log t_2 + \log\tau_1 - \log\left(q_0 - 1/2\right)\right).
\end{equation}
As for the initial sum over $1\le n<  q_0P$, 
instead of bounding it using the triangle inequality throughout as in \eqref{initial sum bound}, 
we will refine our estimate by using $k$-th order van der Corput lemmas with $k = 4$ and $5$
in the subintervals $(Z_5, Z_4]$ and $(Z_6, Z_5]$ respectively, where
\begin{align*}
    Z_4:=q_0 P - 1, \qquad Z_5 := \mu_5 t^{\theta_5/2}, \qquad Z_6 := \mu_6 t^{\theta_6/2},
\end{align*}
with $\theta_5,\theta_6$ the same as in the proof of Theorem \ref{main theorem} 
and $\mu_5,\mu_6$ real positive numbers to be chosen later, subject to
\begin{equation}\label{mu_condition}
1 < \mu_6 t_2^{\theta_6/2} \le \mu_5 t_2^{\theta_5/2} \le q_0\lceil t_2^{1/3}\rceil - 1.
\end{equation}
We remark that $Z_5,Z_6$ will serve a similar purpose as $Y_k$ in the proof of Theorem \ref{main theorem}, 
whereas the new parameters $\mu_5,\mu_6$ will allow us to fine-tune our estimates in this critical region. 

Let $h' > 1$ be a real number to be chosen later, and let $X_m' = (h')^{-m}Z_4$ for integer $m$. Define
\[
M' := \left\lceil\frac{\log (Z_4/Z_6)}{\log h'}\right\rceil.
\]
The definition of $M'$ guarantees $X'_{M'} \le Z_6 < X'_{M' - 1}$. Next, define the subsum
$$S_m':= \sum_{X_{m}' + 1 < n \le X_{m - 1}'}\frac{1}{n^{1 + it}},$$
so that, similarly to \eqref{Sm sum bound}, 
\[
\left|\sum_{X'_{M' - 1} < n \le Z_4}\frac{1}{n^{1 + it}}\right| \le \sum_{1 \le m \le M' - 1}\left(\frac{1}{X_m'} + |S_m'|\right).
\]
By a similar calculation 
as in \eqref{XM_XM1_ratio0}, for each $1 \le m \le M' - 1$,
\[
\frac{\lfloor X_{m - 1}'\rfloor}{\lfloor X_{m}'\rfloor + 1} \le h'.
\]
Let $k=4$ or $5$. Following the argument of Theorem \ref{main theorem}, if $Z_{k + 1} < X_{m}' \le Z_{k}$, then
\begin{align*}
\left|S_m'\right|
&\le C_k(\eta, h') (\lfloor X_{m}'\rfloor + 1)^{-k/(2K - 2)}t^{1/(2K - 2)} \\
&\qquad\qquad  + D_k(\eta, h') (\lfloor X_m' \rfloor + 1)^{k/(2K - 2)- 2/K}t^{-1/(2K - 2)}\\
&\le C_k(\eta, h') Z_{k + 1}^{-k/(2K - 2)}t^{1/(2K - 2)} + D_k(\eta, h') (Z_k + 1)^{k/(2K - 2)- 2/K}t^{-1/(2K - 2)}.
\end{align*}
We write
$\mu_4:=q_0 + (q_0 - 1)t_2^{-1/3}$ so that $Z_4 \le \mu_4 t^{1/3}$. Since $k / (2K - 2) - 2/K \ge 0$, for $t \ge t_2$,
$$
|S_m'| \le 
A_k'(t) := \begin{cases}
\displaystyle \mu_{5}^{-2/7}C_4 t^{-3/182} + (\mu_4 + t_2^{-1/3})^{1/28}D_4 t^{-5/84}, & k = 4,\\
\displaystyle \mu_{6}^{-1/6}C_5 t^{-7/990} + (\mu_5 + t_2^{-4/13})^{1/24}D_5 t^{-4/195}, & k = 5.
\end{cases}
$$
Let $N_k'$ be the number of $X_{m}'$ satisfying $Z_{k + 1} < X_{m}' \le Z_{k}$. Then 
\begin{align*}
N_k' &\le \frac{\log Z_{k} - \log Z_{k + 1}}{\log h'} + 1,\qquad 
\text{therefore}\qquad N_k' \le B'_k\frac{\log t}{\log h'},
\end{align*}
where
\[
B'_k := \begin{cases}
\displaystyle\frac{1}{39} + \frac{\max\{0, \log \mu_4 - \log \mu_{5} + \log h'\}}{\log t_2},&k = 4,\\
\, &\\
\displaystyle\frac{28}{429} + \frac{\max\{0, \log \mu_{5} - \log \mu_{6} + \log h'\}}{\log t_2},&k = 5.
\end{cases}
\]
Put together, we have
\[
\sum_{Z_{k + 1} < X'_{m} \le Z_{k}}|S_m'| \le \frac{A'_k(t) B'_k}{\log h'}\log t.
\]

For $t \ge t_2$, the function $A_4'(t)\log t$ is decreasing, so $A_4'(t)\log t \le A_4'(t_2)\log t_2$. For $k = 5$, we use 
\[
t^{-7/990}\log t \le \frac{990}{7e}\qquad\text{and}\qquad t^{-4/195}\log t \le t_2^{-4/195}\log t_2,\qquad (t \ge t_2).
\]
Therefore, 
\begin{equation}\label{4th_deriv_test_bound_2}
\begin{split}
\sum_{1 \le m \le M' - 1}|S'_m| &\le \frac{1}{\log h'}\bigg(A'_4(t_2) B'_4\log t_2 + \frac{990}{7e}\frac{B'_5 C_5}{\mu_{6}^{1/6}} \\
&\qquad\qquad + \left(\mu_5 + t_2^{-4/13}\right)^{1/24}B'_5D_5t_2^{-4/195}\log t_2\bigg).
\end{split}
\end{equation}
Next, similarly to \eqref{extra terms estimate}, we obtain
\begin{equation}\label{fourth deriv extra terms}
\sum_{1 \le m \le M' - 1}\frac{1}{X_m'} < \frac{h'}{h' - 1}\frac{1}{Z_6} \le \frac{h'}{(h' - 1)\mu_6 t_2^{\theta_6/2}},\qquad (t \ge t_2).
\end{equation}
In the remaining range 
$1\le n \le X'_{M' - 1}$, 
we use that
$h' Z_6 \ge X'_{M' - 1}$. 
Combined with the triangle inequality and \eqref{harmonic bound}, this yields 
\begin{equation}\label{4th_deriv_test_bound_3}
\begin{split}
\left|\sum_{1 \le n \le \lfloor X_{M' - 1}\rfloor}\frac{1}{n^{1+it}}\right| &\le \sum_{1 \le n \le \lfloor h'Z_6\rfloor}\frac{1}{n}
\le \log h' Z_6 + \gamma + \frac{1}{2(h'Z_6 - 1)} \\
&\le \frac{\theta_6}{2}\log t + \log \mu_6 + \log h' + \gamma + \frac{1}{2\big(h'\mu_6 t_2^{\theta_6/2} - 1\big)}.
\end{split}
\end{equation}

Next, consider the second sum 
$\sum_{1 \le n \le n_1}n^{it}$ in Theorem~\ref{patel theorem},
which was just bounded by $n_1$ in our previous calculations.
Let $q$, $q_0$, $P$, $Q$ and $N$ be as defined in the proof of Proposition \ref{third_deriv_bound}, and suppose as before that $t \ge t_2 \ge \exp(182/3)$. 
Using $P \le t^{1/3} + 1$,
the estimate \eqref{SN_bound} gives for $q_0 \le q \le Q$,
\[
\max_{1 \le L \le P} |S_N(L)|  \le \left(t^{1/3} + 1\right)\left(\frac{\alpha_0^{1/2}}{\pi^{1/6}q^{1/2}} + \frac{B_0}{t^{1/12}}\right).
\]
We combine this with the inequality
\[
\sum_{q = q_0}^{Q}\frac{1}{q^{1/2}} \le \int_{q_0 - 1}^{Q}\frac{\text{d}x}{x^{1/2}} < 2Q^{1/2},
\]
as well as bound the sum over $1\le n < q_0 P$ by the number of terms in the sum, which gives
\begin{align*}
\left|\sum_{n = 1}^{n_1}n^{it}\right| &< q_0 P - 1 + \left(t^{1/3} + 1\right)\left(\frac{2\alpha_0^{1/2}Q^{1/2}}{\pi^{1/6}} + \frac{B_0Q}{t^{1/12}}\right).
\end{align*}
Since $Q \le t^{1/6}/\sqrt{2\pi}$, we obtain for $t\ge t_2$,
\begin{equation}\label{second sum estimate}
    \left|\sum_{n = 1}^{n_1}n^{it}\right| < C\, t^{5/12},
\end{equation}
where 
\begin{align*}
C(q_0, \eta, t_2) &:= \left(1 + t_2^{-1/3}\right)\left(\frac{q_0}{t_2^{1/12}}+\frac{2^{3/4}\alpha_0^{1/2}}{\pi^{5/12}} + \frac{B_0}{\sqrt{2\pi}}\right).
\end{align*}
Therefore, after multiplying by $g(t)t^{-1/2}$, we obtain a sharper bound 
on the contribution of the second sum in Theorem~\ref{patel theorem}. Namely, the bound
$C g(t_2)\,t^{-1/12}$, which replaces the original estimate $g(t_2)/\sqrt{2\pi}$.

Lastly, combining \eqref{tail sum bound 2}, \eqref{4th_deriv_test_bound_2}, \eqref{fourth deriv extra terms}, \eqref{4th_deriv_test_bound_3} and \eqref{second sum estimate}, and noting that $\theta_6 = 16/33$,
we have for $t \ge t_2$ 
\[
|\zeta(1 + it)| \le \frac{8}{33}\log t + B_3,
\]
where
\begin{align*}
B_3 :=&\, \frac{1}{\log h'}\left(A'_4(t_2) B'_4\log t_2 + \frac{990}{7e}\frac{B'_5 C_5}{\mu_{6}^{1/6}}
 + (\mu_5 + t_2^{-4/13})^{1/24}B'_5D_5t_2^{-4/195}\log t_2\right)\\
&\qquad + \frac{h'}{(h' - 1)\mu_6 t_2^{\theta_6/2}} + \frac{\alpha_0(W_0, \lambda_0)^{1/2}}{\pi^{1/6}}\frac{2}{\sqrt{q_0 - 1/2}}\\
&\qquad + \frac{B_0(W_0, \lambda_0)}{t_2^{1/12}}\left(\frac{1}{6}\log t_2 + \log\tau_1 - \log\left(q_0 - 1/2\right)\right)\\
&\qquad + \log \mu_6 + \log h'+\gamma + \frac{1}{2\big(h'\mu_6 t_2^{\theta_6/2} - 1\big)} + C(q_0, \eta, t_2)\frac{g(t_2)}{t_2^{1/12}} + \mathcal{R}(t_2).
\end{align*}
We choose 
\begin{alignat*}{3}
\eta &= 1.3348,\qquad\qquad  
&&h' = 1.056,\qquad\qquad  
&&q_0 = 46,\\
\mu_5 &= 48.575,\qquad\qquad  
&&\mu_6 = 51.296,\qquad\qquad  
&&t_2 = \exp(90).
\end{alignat*}
Verifying that \eqref{mu_condition} holds, we calculate $B_3 \le 12.45$, i.e. that 
\[
|\zeta(1 + it)| \le \frac{8}{33}\log t + 12.45,\qquad t \ge \exp(88).
\]
This implies Theorem \ref{main theorem} for $\exp(88) \le t \le \exp(662)$.

\section{Proof of Theorem~\ref{main theorem 1}}\label{Theorem 2 proof}

We begin by proving some bounds on $\zeta(s)$. To help make our notation more intuitive, we will allow reuse of variable names from previous sections. The reader should keep in mind these variables mean different things in the confines of this section. Since this section is independent from Section {\ref{Theorem 1 proof}, Section {\ref{A0 A2 calc}} and Section {\ref{Proposition proof}}, this will hopefully cause no confusion.

\begin{lemma}\label{zeta_log_bound}
Let $C > 0$ and $t_0 \ge e^e$ be constants. 
Suppose $t \ge t_0$. Define
$$\sigma_t:=1 - \frac{C(\log\log t)^2}{\log t}.$$
Additionally, suppose that $C$ satisfies 
\begin{equation}\label{C assumption}
\frac{C(\log\log t_e)^2}{\log t_e} \le \frac{1}{2},\qquad \text{where}\qquad
t_e := e^{e^2}.
\end{equation}
For each $t\ge t_0$, if $\sigma_t\le \sigma \le 2$, then
\begin{equation}\label{zeta_AB}
|\zeta(\sigma + it)| \le A\log^Bt\qquad \text{where}\qquad
A = 76.2,\qquad B = \frac{2}{3} + \frac{71.2\, C^{3/2}}{e^2}.
\end{equation}
\end{lemma}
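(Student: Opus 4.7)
My plan is to invoke Ford's explicit Vinogradov--Korobov bound \cite{ford_zero_2002},
\[
|\zeta(\sigma+it)| \le 76.2\, t^{4.45(1-\sigma)^{3/2}} (\log t)^{2/3}, \qquad \tfrac{1}{2} \le \sigma \le 1,\ t \ge 3,
\]
and to reduce the factor $t^{4.45(1-\sigma)^{3/2}}$ to a polynomial in $\log t$ when $\sigma \ge \sigma_t$. For the complementary range $1 \le \sigma \le 2$, I would transport the bound at $\sigma = 1$ into the strip via the Phragm\'en--Lindel\"of principle, which applies here since $|\zeta(\sigma+it)| \to 1$ uniformly as $\sigma \to \infty$ (so the required growth condition is trivial).

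Before invoking Ford's bound I would verify that $\sigma_t \ge 1/2$. Setting $\phi(t) := C(\log\log t)^2/\log t$ and substituting $u = \log\log t$ yields $\phi = C u^2 e^{-u}$, a function maximized at $u = 2$, i.e.\ at $t = t_e$. The hypothesis \eqref{C assumption} then ensures $\phi(t) \le 1/2$ for every $t \ge t_0$, so $\sigma_t \ge 1/2$ throughout and Ford's bound is applicable.

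The crux of the argument is the estimate
\[
t^{4.45(1-\sigma)^{3/2}} \le \exp\!\left( 4.45\, C^{3/2} \, \frac{(\log\log t)^3}{(\log t)^{1/2}} \right) = (\log t)^{\psi(t)},
\]
where $\psi(t) := 4.45\, C^{3/2} (\log\log t)^2 / (\log t)^{1/2}$. Substituting $v = \log t$ and differentiating, one checks that $(\log v)^2 / v^{1/2}$ attains its global maximum $16/e^2$ at $v = e^4$. Consequently, $\psi(t) \le 4.45 \cdot 16\, C^{3/2}/e^2 = 71.2\, C^{3/2}/e^2 = B - 2/3$, and Ford's bound yields $|\zeta(\sigma+it)| \le 76.2(\log t)^B$ for $\sigma \in [\sigma_t, 1]$. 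Phragm\'en--Lindel\"of (or a maximum modulus argument on a suitable rectangle $[1,b] \times [t-1, t+1]$ with $b$ large, using $|\zeta(b+it')| \to 1$) then extends this to $\sigma \in [1, 2]$.

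The main obstacle is not conceptual but bookkeeping: one must pin down the location and value of the max of $(\log v)^2/v^{1/2}$, and then check the quoted $B$ is admissible for every $t_0 \ge e^e$. For $t_0 \le e^{e^4}$ the maximum is actually attained at $v = e^4$, while for larger $t_0$ the supremum over $t \ge t_0$ is only smaller, so the stated $B$ works uniformly across admissible $t_0$.
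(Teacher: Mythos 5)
Your treatment of the range $\sigma_t \le \sigma \le 1$ is essentially identical to the paper's: both invoke Ford's explicit Vinogradov--Korobov bound, verify $\sigma_t \ge 1/2$ by locating the maximum of $u^2 e^{-u}$ (i.e.\ of $(\log\log t)^2/\log t$) at $u=2$, and then convert the factor $t^{4.45(1-\sigma)^{3/2}}$ into a power of $\log t$ by maximizing $(\log v)^2/v^{1/2}$ at $v=e^4$ with value $16/e^2$, obtaining $B = 2/3 + 71.2\,C^{3/2}/e^2$. These computations agree with the paper's in every respect.

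The gap is in the extension to $1 \le \sigma \le 2$, which you treat in one sentence but which requires genuine care for two reasons. First, $\zeta$ has a pole at $s=1$ lying on the boundary of the strip $1 \le \sigma \le 2$, so Phragm\'en--Lindel\"of cannot be applied directly to $\zeta(s)$ on this strip; the paper circumvents this by working with $f(s) = (s-1)\zeta(s)$ and then dividing out the factor $|s-1|$ at the end. Your alternative --- a max-modulus argument on a rectangle $[1,b]\times[t-1,t+1]$ --- is circular as stated: to bound $\zeta$ on the horizontal edges $\Im s = t\pm 1$, $1 \le \Re s \le b$, you need exactly the estimate you are trying to prove, and there is no obvious a priori bound there that avoids the circularity. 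Second, the constant matters: you would want to use $|\zeta(1+it)| \le 76.2\log^{2/3} t$ on the $1$-line (Ford at $\sigma=1$), but the explicit Phragm\'en--Lindel\"of interpolation picks up harmless-looking but nonzero multiplicative slack (factors like $\sqrt{1+9t_0^{-2}}$ and $1+\log\sqrt{1+9t_0^{-2}}/\log t_0$ in the paper's version), which would push the result slightly past $76.2\log^B t$. The paper absorbs this slack by starting from the sharper bound $|\zeta(1+it)| \le 62.6\log^{2/3} t$ of Trudgian, verified numerically for small $t$, and by citing an explicit Phragm\'en--Lindel\"of lemma. Without these two ingredients --- removing the pole and sourcing a smaller constant on the $1$-line to spend on the interpolation overhead --- the claimed constant $A = 76.2$ is not established for $1 \le \sigma \le 2$.
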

\begin{proof}
From Ford \cite{ford_vinogradov_2002}, if $1/2 \le \sigma \le 1$, then
\begin{equation}\label{ford bound}
    |\zeta(\sigma + it)| \le 76.2\, t^{4.45(1 - \sigma)^{3/2}}\log^{2/3}t.
\end{equation}
As $\sigma_t\ge 1/2$, a consequence of the assumption \eqref{C assumption} 
and considering that $(\log \log t)^{2}/\log t$ reaches its maximum at $t = t_e$,
we may use \eqref{ford bound} for any $\sigma \in [\sigma_t, 1]$.  

In view of this, since $(1-\sigma)^{3/2}$ is monotonically decreasing with $\sigma$,
if $\sigma_t \le \sigma \le 1$, then \eqref{ford bound} gives
\begin{align}\label{expr_exp}
|\zeta(\sigma + it)| \le 76.2 \exp\left(4.45(1-\sigma_t)^{3/2}\log t + \frac{2}{3}\log\log t\right).
\end{align}
Or, written differently,
\begin{equation}\label{Btilde bound}
    |\zeta(\sigma + it)| \le A \exp\left(\tilde{B}\log\log t\right),
\end{equation}
where $A$ is defined as in \eqref{zeta_AB} and
\begin{equation*}
\tilde{B} := \frac{2}{3}+ 4.45\, C^{3/2}\,\frac{(\log\log t)^2}{\sqrt{\log t}}.
\end{equation*}
As $(\log \log t)^{2}/\sqrt{\log t}$
reaches its maximum at $t = e^{e^4}$, attaining the value $16/e^2$, we obtain
after a simple computation that
$\tilde{B} \le B$, with $B$ defined as in \eqref{zeta_AB}.
The desired result therefore follows for $\sigma_t\le \sigma\le 1$
on replacing $\tilde{B}$ with $B$ in \eqref{Btilde bound}.

To show that the result holds for $1 \le \sigma \le 2$ as well, we use the Phragm\'en--Lindel\"of Principle on the holomorphic function 
$$f(s) = (s - 1)\zeta(s).$$ 
To this end, on the $1$-line we have
\[
|f(1 + it)| = |t|\,| \zeta(1 + it)| \le 62.6\, |2 + it|\log^{2/3}|2 + it|.
\]
This inequality is verified numerically for $|t| < 3$ and is a consequence of 
the bound $|\zeta(1 + it)| \le 62.6 \log^{2/3}t$ for $|t| \ge 3$, 
given in \cite{trudgian_new_2014}.  
Also, on the $2$-line, plainly,
\[
|f(2 + it)| \le |1 + it|\, \zeta(2) < 62.6|3 + it|\log^{2/3}|3 + it|,
\]
for all real $t$. 
By \cite[Lemma 3]{trudgian_improved_2014}, we thus obtain\footnote{We apply \cite[Lemma 3]{trudgian_improved_2014} 
with the following parameter values (in the notation in the cited paper): 
$a = 1$, $b = 2$, $Q = 1$, $\alpha_1 = \beta_1 = 1$, $\alpha_2 = \beta_2 = 2/3$ and $A = B = 62.6$. 
Note that the $A$ and $B$ in \cite[Lemma 3]{trudgian_improved_2014} are different from the $A$ and $B$ in the statement of
our lemma.}
\begin{equation}\label{pl bound}
    |f(s)| \le 62.6|1 + s|\log^{2/3}|1 + s|,\qquad (1 \le \sigma \le 2). 
\end{equation}
Moreover, for  $1 \le \sigma \le 2$ we have the easy estimates,
\begin{equation*}
    \begin{split}
        &\frac{|1 + s|}{|s - 1|} \le \frac{\sqrt{9 + t^2}}{t} \le \sqrt{1 + 9t_0^{-2}},\\
        &\log|1 + s| \le \log\sqrt{9 + t^2} \le \log t + \log\sqrt{1 + 9t_0^{-2}}.
    \end{split}
\end{equation*}
So that, for $1 \le \sigma \le 2$ and $t \ge t_0$, 
 the inequality \eqref{pl bound} implies that
\begin{align*}
|\zeta(\sigma + it)| &\le 62.6\sqrt{1 + 9t_0^{-2}}\left(1 + \frac{\log \sqrt{1 + 9t_0^{-2}}}{\log t_0}\right)\log^{2/3}t
< A \log^Bt,
\end{align*}
where $A$ and $B$ are defined as in \eqref{zeta_AB}, as desired.
\end{proof}
\begin{remark}
Following Titchmarsh's argument \cite[Theorem~5.17]{titchmarsh_theory_1986}, we could make use of \cite[Theorem~1.1]{yang_explicit_2023}. For simplicity, however, we used the Richert-type bound \eqref{ford bound} due to Ford \cite{ford_vinogradov_2002} instead. 
\end{remark}

\begin{lemma}\label{zeta_bound_after_1}
Let $t_0 \ge e^e$ and $d_1 > 0$ be constants. 
Suppose $t\ge t_0$. We have 
\[
\left|\zeta\left(1 + d_1\frac{\log\log t}{\log t} + it\right)\right| \le C'(d_1,t_0)\frac{\log t}{\log\log t},
\]
where $\gamma$ is Euler's constant, and
\begin{equation}\label{C'}
C'(d_1, t_0) = \frac{1}{d_1}\exp\left(\gamma d_1\frac{\log\log t_0}{\log t_0}\right).
\end{equation}
\end{lemma}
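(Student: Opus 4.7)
The strategy is to reduce the bound on the shifted line to an estimate for the real-valued function $\zeta(\sigma)$ slightly to the right of $1$, and then to expand $\zeta(\sigma)$ via its Laurent series at $s=1$.

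Set $\sigma := 1 + d_1(\log\log t)/\log t$, so $\sigma > 1$ and the Dirichlet series converges absolutely. Since $|n^{-it}|=1$, the triangle inequality gives
$$|\zeta(\sigma+it)| \;\le\; \sum_{n=1}^{\infty} n^{-\sigma} \;=\; \zeta(\sigma).$$
It therefore suffices to bound $\zeta(\sigma)$ from above.

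The key step is to establish the clean inequality
$$\zeta(\sigma) \;\le\; \frac{1}{\sigma-1}\exp\bigl(\gamma(\sigma-1)\bigr), \qquad \sigma > 1.$$
Equivalently, the auxiliary function $F(\sigma) := \log\bigl((\sigma-1)\zeta(\sigma)\bigr) - \gamma(\sigma-1)$ should be nonpositive on $(1,\infty)$. The Laurent expansion $\zeta(\sigma) = 1/(\sigma-1) + \gamma - \gamma_1(\sigma-1) + O((\sigma-1)^2)$, where $\gamma_1$ is the first Stieltjes constant, yields immediately $F(1^+)=0$, $F'(1)=0$, and $F''(1) = -2\gamma_1 - \gamma^2 < 0$ (numerically $-2\gamma_1 \approx 0.146$ and $\gamma^2 \approx 0.333$), so $F \le 0$ in a neighbourhood of $\sigma = 1$. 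To extend globally, I would show $F'(\sigma)\le 0$ throughout $(1,\infty)$, which is equivalent to the von Mangoldt lower bound $\sum_{n\ge 1}\Lambda(n)n^{-\sigma} \ge 1/(\sigma-1) - \gamma$ for $\sigma>1$; this in turn follows by partial summation from a sufficiently sharp explicit Chebyshev estimate for $\psi(x)$. Alternatively, the local Taylor analysis above, combined with a numerical verification on a bounded range of $\sigma$ and the fact that $(\sigma-1)\zeta(\sigma) \to 0$ relative to $e^{\gamma(\sigma-1)}$ for large $\sigma$, would also do.

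Having the inequality in hand, substituting $\sigma - 1 = d_1(\log\log t)/\log t$ gives
$$\zeta(\sigma) \;\le\; \frac{\log t}{d_1\log\log t}\,\exp\!\left(\gamma d_1 \frac{\log\log t}{\log t}\right).$$
The function $u \mapsto (\log u)/u$ is decreasing for $u > e$, since $(\log u/u)' = (1-\log u)/u^2$. Taking $u = \log t$, this shows $t \mapsto (\log\log t)/\log t$ is decreasing for $t > e^e$. Since $t \ge t_0 \ge e^e$, we obtain $(\log\log t)/\log t \le (\log\log t_0)/\log t_0$; substituting this bound only in the exponential factor (and retaining $\log t/\log\log t$ in the prefactor) produces exactly the constant $C'(d_1, t_0)$ as defined.

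The main obstacle is the second step: making the inequality $(\sigma-1)\zeta(\sigma) \le e^{\gamma(\sigma-1)}$ rigorous uniformly on $(1,\infty)$. The local analysis at $\sigma = 1$ is immediate from the Laurent expansion, but a uniform statement requires either the monotonicity of $F'$ (equivalent to the von Mangoldt bound above) or a hybrid argument combining the Taylor estimate with an explicit numerical check for $\sigma$ bounded away from $1$.
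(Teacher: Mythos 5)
Your argument follows exactly the same route as the paper's proof: reduce to $\zeta(\sigma)$ by the triangle inequality applied to the Dirichlet series, invoke the bound $\zeta(\sigma)\le e^{\gamma(\sigma-1)}/(\sigma-1)$ for $\sigma>1$, substitute $\sigma-1=d_1\log\log t/\log t$, and use that $\log\log t/\log t$ is decreasing for $t\ge e^e$ to freeze the exponential factor at $t_0$ while keeping $\log t/\log\log t$ in the prefactor. Those steps and the resulting constant $C'(d_1,t_0)$ match the paper verbatim.

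The one difference is that the paper does not prove the middle inequality: it cites it as a uniform bound of Bastien and Rogalski. You instead attempt to derive it, and this is where your proposal is incomplete, as you yourself flag. Your local analysis at $\sigma=1$ is correct ($F(1^+)=F'(1)=0$ and $F''(1)=-2\gamma_1-\gamma^2<0$), and the reduction of the global claim $F'\le 0$ on $(1,\infty)$ to $-\zeta'(\sigma)/\zeta(\sigma)\ge 1/(\sigma-1)-\gamma$ is also correct; but neither the partial-summation/Chebyshev route nor the ``Taylor expansion plus numerical check'' route is actually carried out, so as written the key inequality is only established in a neighbourhood of $\sigma=1$, whereas the lemma needs it at $\sigma=1+d_1\log\log t/\log t$ for every $t\ge t_0$, a range whose extent depends on $d_1$ and $t_0$. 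The gap is entirely closable --- the inequality $(\sigma-1)\zeta(\sigma)\le e^{\gamma(\sigma-1)}$ on all of $(1,\infty)$ is precisely the published result the paper cites --- but a self-contained proof would require completing one of the two arguments you sketch, or simply replacing the sketch with the citation.
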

\begin{proof}
We use the following uniform bound due to Ramar\'{e} \cite[Lemma~5.4]{ramare2016explicit}:
\[
|\zeta(\sigma + it)| \le \zeta(\sigma) \le \frac{e^{\gamma(\sigma - 1)}}{\sigma - 1},\qquad (\sigma > 1). 
\]
Substituting $\sigma = 1 + d_1\log\log t / \log t$ therefore gives for $t\ge t_0$,
\[
\zeta(\sigma) \le \frac{e^{\gamma d_1 \log\log t / \log t}}{d_1 \log \log t / \log t} \le \frac{1}{d_1}\exp\left(\frac{\gamma d_1\log\log t_0}{\log t_0}\right)\frac{\log t}{\log \log t},
\]
as claimed. 
In the last inequality, we used that $\log\log t/\log t$ reaches its maximum at $t=e^e$ and is monotonically decreasing after that.
\end{proof}

\subsection{Bounding $|\zeta'(1+it)/\zeta(1+it)|$}

We follow the argument of \cite{trudgian2015explicit}, which gives an explicit version of results in \cite[\S3]{titchmarsh_theory_1986}. The method relies on Theorem \ref{borelcaratheodory}.

We will therefore construct concentric disks, centred just to the right of the line $s = 1+it$, and extend their radius slightly to the left and into the critical strip. Ultimately, we want to apply Lemmas \ref{log_deriv_zeta_lem1} and \ref{log_deriv_zeta_lem2} with $f(s)=\zeta(s)$, which will give us the desired results. 

The process is as follows. 
Let $t_0\ge e^e$ and suppose $t'\ge t_0$.
Let $d$ be a real positive constant, which will be chosen later. 
Denote the center of the concentric disks to be constructed by $s' = \sigma' + it'$, 
 where
\begin{equation}\label{sigma'0}
\sigma'= 1+\delta_{t'}, \qquad\text{with}\qquad \delta_{t'} := \frac{d \log\log t' }{ \log t'}.
\end{equation}
Notice that 
$\delta_{t'}$ is decreasing with $t'$ 
and is at most $d/e$.
Let $r>0$ denote the radius of our largest concentric disk. 
In the sequel, $r$ will be determined as function of $s'$. 

In addition, suppose $\sigma' +r \le 1+\epsilon$ 
where $\epsilon\le 1$ is another positive constant to be chosen later.
This implies, for instance, that $r\le \epsilon \le 1$.
In order to enforce this inequality on $\sigma'$ and $r$, it is enough to demand that
\begin{equation}\label{r_eps_cond}
    \frac{d}{e}+r\le \epsilon.
\end{equation}

Let $s=\sigma+it$ be a complex number. 
Aiming to apply Lemma~\ref{log_deriv_zeta_lem1} in the disk $|s-s'|\le r$,
we first seek a valid $A_1$ in that disk. 
Similarly, for Lemma~\ref{log_deriv_zeta_lem2}, we seek
 a valid $A_2$ at the disk center $s=s'$. In applying 
 Lemma~\ref{log_deriv_zeta_lem2},  we moreover need to ensure 
 that the non-vanishing condition on $f(s)=\zeta(s)$ 
 is fulfilled, which we do by way of a zero-free region of zeta.

Let us first determine a valid $A_1$, with the aid of Lemma~\ref{zeta_log_bound}. 
To this end,
let $C>0$ be a parameter restricted by the inequality in \eqref{C assumption}, 
so that (in the notation of Lemma~\ref{zeta_log_bound}) 
$\sigma_t\ge 1/2$ for $t\ge e$.  
Note that $\sigma_t$ is monotonically decreasing for $e\le t\le t_e$
and monotonically increasing thereafter.
So, the maximum of $\sigma_t$ over the interval 
$t'-r\le t\le t'+r$ occurs at one of the end-points. 
For example, if $t'-r\ge t_e$, then the maximum occurs at $t=t'+r$.
And if $t'+r\le t_e$, then the maximum occurs at $t=t'-r$.
With this in mind, we calculate
\begin{equation}\label{C_1_cond0}  
\max_{|t-t'|\le r} \sigma_t= \max\{ \sigma_{t'-r}, \sigma_{t'+r}\} < 1-\frac{C_0\,(\log\log t')^2}{\log t'},
\end{equation}
where, since $r\le \epsilon$,
\begin{equation*}
C_0 := C\, \min \left\lbrace \left(\frac{\log\log (t'-\epsilon)}{\log\log t'}\right)^2, 
\frac{\log t'}{\log (t'+\epsilon)}\right\rbrace.
\end{equation*}
Clearly, $C_0=C_0(t')$ increases monotonically with $t'$ (towards $C$). 
We therefore set
\begin{equation}\label{C_10}
C_1:= C_0(t_0) \qquad\text{and}\qquad 
\sigma_{1,t'} :=  1-\frac{C_1\,(\log\log t')^2}{\log t'}.
\end{equation}
So that, by \eqref{C_1_cond0}, 
no matter $t'$, $r$ and $\epsilon$, so long as they are subject to our conditions, 
$$\sigma_{1,t'}\ge\max_{|t-t'|\le r} \sigma_t.$$
Consequently, Lemma~\ref{zeta_log_bound} gives
that for each $t\in [t'-r,t'+r]$
and any $\sigma \in [\sigma_{1,t'},2]$, 
and on recalling $r\le \epsilon$,
\begin{equation}\label{A3_lb0}
|\zeta(s)| \le A\log^B (t'+r) \leq A_3\log^B t', \qquad A_3 = A\left(1+\frac{\log(1+\epsilon/t')}{\log t'}\right)^B.
\end{equation}

We would like the inequality \eqref{A3_lb0} to hold throughout the disk $|s-s'|\le r$. 
This will be certainly the case if this disk lies entirely in the rectangle specified by $\sigma \in [\sigma_{1,t'},2]$ and $t\in [t'-r,t'+r]$. This follows, in turn, on requiring $\sigma'-r \ge \sigma_{1,t'}$ or, equivalently, that
\begin{equation*}
r \le \frac{C_1(\log\log t')^2}{\log t'} + \delta_{t'}.
\end{equation*}
Therefore, subject to the constraint \eqref{r_eps_cond}, the following choice of $r$ works:
\begin{equation}\label{r_0.5_cond0}
r=r_{t'}:= \left(C_1+\frac{d}{\log\log t'}\right)\frac{(\log\log t')^2}{\log t'}.
\end{equation}
To ensure the constraint \eqref{r_eps_cond}
is met for all $t'\ge t_0$, it suffices that 
\begin{equation}\label{r_eps_cond0}
 \frac{d}{e}+\left(C_1+d\right)\frac{4}{e^2}\le \epsilon \le 1.  
\end{equation}
Here, we used that $(\log \log t')^2/\log t'$ reaches its maximum of $4/e^2$ 
at $t'=t_e$.

Overall, combining \eqref{A3_lb0} and \eqref{r_0.5_cond0} 
with the trivial bound $|\zeta(s')| \ge \zeta(2\sigma')/\zeta(\sigma')$, 
 easily seen on considering the Euler product of zeta, we obtain
that throughout the disk $|s-s'|\le r$,
\begin{align*}
\left| \frac{\zeta(s)}{\zeta(s')}\right| \le &\, A_3\,\frac{\zeta(\sigma')}{\zeta(2\sigma')}\log^B t' =A_3\,\frac{X(\sigma')}{\sigma' -1}\log^B t',
\end{align*}
where 
$$X(\sigma')=\frac{\zeta(\sigma')(\sigma'-1)}{\zeta(2\sigma')}.$$ 
Since $\sigma'=\sigma'(t')$ decreases to $1$ with $t'\ge t_0$, 
$X(\sigma')$ decreases to $1/\zeta(2)$ with $t'$.
Thus, recalling from \eqref{sigma'0} that $\sigma'=1+\delta_{t'}$ 
and observing $A_3$ is also decreasing in $t'\ge t_0$, we obtain 
that throughout the disk $|s-s'|\le r$,
\begin{equation}\label{AX max}
    \left| \frac{\zeta(s)}{\zeta(s')}\right| \le
    \frac{A_{\max}\, X_{\max}}{d}\frac{\log^{B+1} t'}{\log\log t'},
\end{equation}
where
$A_{\max} = A_3(t_0)$ and $X_{\max}=X(\sigma'(t_0))$.

In summary, when applying Lemma \ref{log_deriv_zeta_lem1} in the disk $|s-s'|\le r$, with $r$ as in \eqref{r_0.5_cond0} and subject to \eqref{r_eps_cond0}, we may take
\begin{equation*}
\log A_1 = (B+1)\log\log t' -\log\log\log t' + \log\left( \frac{A_{\max}\,X_{\max}}{d}\right).
\end{equation*}

Next, in preparation for applying 
Lemma \ref{log_deriv_zeta_lem2}, we want to ensure 
that the intersection of the disk $|s-s'|\le r$
 and the right half-plane $\sigma \ge \sigma'-\alpha r$
lies entirely in a zero-free region for zeta. 
By \cite[Corollary~1.2]{yang_explicit_2023}, we have the following zero-free region, 
valid for $t\ge 3$.
\begin{equation*}
\zeta(\sigma+ it)\neq 0\quad\text{for}\quad \sigma> 1-\frac{c_0\log\log t}{\log t}, \quad \text{where}\quad 
c_0=\frac{1}{21.233}.
\end{equation*}
Since $1- c_0 \log\log t/\log t$ is monotonically increasing 
for $t\ge t_0$ and since $t'\ge t_0$ and $r\le \epsilon$, it suffices for our purposes to require
that
\begin{equation*}
\sigma' -\alpha r \ge 1- \frac{c_0\log\log (t'+\epsilon)}{\log (t'+\epsilon)}.
\end{equation*}
In other words, it suffices that
\begin{equation*}
    \alpha \le \frac{1}{r}\left(\delta_{t'} +  \frac{c_0\log\log (t'+\epsilon)}{\log (t'+\epsilon)}\right).
\end{equation*}
Therefore, subject to $\alpha <1/2$ as demanded by Lemma~\ref{log_deriv_zeta_lem2}, 
the following choice of $\alpha$ works.
\begin{equation}\label{alphar}
 \alpha = \frac{1}{r}\cdot \frac{(d+c_1)\log\log t'}{\log t'} = 
 \frac{d+c_1}{d+C_1 \log\log t'},
\end{equation}
where (using monotonicity to deduce the inequality below)
\begin{equation}\label{c0 bound}
c_1 := \frac{c_0 \log t_0}{\log(t_0+\epsilon)},\qquad \text{so that}\qquad 
 c_1 \le c_0.
\end{equation}
In particular, the constraint $\alpha<1/2$ is fulfilled if
\begin{equation}\label{alpha cond}
    \frac{d+c_1}{d+C_1 \log\log t_0}<\frac{1}{2}.
\end{equation}

Now, to determine $A_2$, we utilise \cite{delange1987remarque}, 
which asserts that if $\sigma >1$, then
\begin{equation*}
\left| \frac{\zeta'(s)}{\zeta(s)}\right| \le -\frac{\zeta'(\sigma)}{\zeta(\sigma)} < \frac{1}{\sigma -1}.
\end{equation*}
Taking $s=s'$ in this inequality leads to
\begin{equation*}
\left| \frac{\zeta'(s')}{\zeta(s')}\right| < \frac{1}{\delta_{t'}} =\frac{\log t'}{d \log\log t'}.
\end{equation*}
So, we may take $A_2$ in Lemma~\ref{log_deriv_zeta_lem2} to be
$$A_2=\frac{\log t'}{d \log\log t'}.$$

Lastly, in applying Lemma \ref{log_deriv_zeta_lem2},
we need to be able to reach the $1$-line from our position at $s'$. 
So, we set 
$$\beta = \frac{\delta_{t'}}{\alpha r} =\frac{d}{c_1+d}.$$ 
Clearly, $\beta<1$, as required by Lemma~\ref{log_deriv_zeta_lem2}.
The conclusion of the lemma therefore holds throughout the disk 
$|s-s'|\le \alpha\beta r = \delta_{t'}$. This includes, in particular,
the horizontal line segment $s=\sigma + it'$, with $1\le \sigma\le 1+2\delta_{t'}$.

Putting it all together, and making the substitution $t'\to t$, 
Lemma \ref{log_deriv_zeta_lem2} hence furnishes the bound
\begin{equation}\label{lemma6 bound}
\left| \frac{\zeta'(s)}{\zeta(s)}\right| \le \frac{8\beta}{(1-\beta)(1-2\alpha)^2}
 \, \frac{\log A_1}{r}+ \frac{1+\beta}{1-\beta}\,A_2,
\end{equation}
valid in the region
\begin{equation*}
1 \le \sigma \le 1+ \frac{2d \log\log t }{\log t},\qquad t\ge t_0.
\end{equation*}

We now input the values we determined  
for $A_1$, $A_2$, $r$, $\alpha$ and $\beta$ into \eqref{lemma6 bound}.  We also note
\begin{equation*}
    \frac{1+\beta}{1-\beta}=1+\frac{2d}{c_1},\qquad \text{so that}\qquad
    \frac{1+\beta}{1-\beta}\,A_2 =\left( \frac{1}{d}+\frac{2}{c_1}\right)\frac{\log t}{\log\log t},
\end{equation*}
as well as
\begin{equation*}
\frac{8\beta}{1-\beta}=\frac{8d}{c_1},\qquad 
    \frac{1}{(1-2\alpha)^2} =\left(\frac{C_1\log\log t+d}{C_1\log \log t-d-2c_1} \right)^2.
\end{equation*}
Hence, on observing that $1/(1-2\alpha)^2$
is decreasing in $t$, we obtain  the following lemma.

\begin{lemma}\label{zeta'/zeta_main_lem}
Let $t_0\ge e^e$ and suppose that $t\ge t_0$.
Let $C$, $d$ and $\epsilon$ be any positive constants satisfying the constraints 
\eqref{C assumption}, \eqref{r_eps_cond0} and \eqref{alpha cond}. If
$$1 \le \sigma \le 1+ \frac{2d \log\log t}{\log t},$$
then
\begin{equation*}
\left| \frac{\zeta'(\sigma +it)}{\zeta(\sigma+it)}\right| \le Q_1(C,d,\epsilon,t_0)\frac{\log t}{\log\log t},
\end{equation*}
where 
\begin{align*}
Q_1(C,d,\epsilon,t_0) := \lambda_1 \left( B+1 
+ \frac{1}{\log\log t_0}\log\Bigg( \frac{A_{\max}X_{\max}}{d}\Bigg)\right)
+ \lambda_2.
\end{align*}
Here,
\begin{equation*}
\begin{split}
    \lambda_1 := \frac{1}{C_1}\cdot\frac{8d}{c_1} \cdot
    \left(\frac{C_1\log\log t_0+d}{C_1\log \log t_0-d-2c_1} \right)^2, \qquad
    \lambda_2 :=\frac{1}{d}+\frac{2}{c_1}.
\end{split}
\end{equation*}
The number $B$ is defined as in Lemma~\ref{zeta_log_bound} and depends on $C$.
The numbers $A_{\max}$ and $X_{\max}$ are defined as in \eqref{AX max} and depend on $\epsilon$ and $t_0$.
The number $C_1$ is defined in \eqref{C_10}, and depends on $C$, $\epsilon$ and $t_0$. 
The number $c_1$ is defined in \eqref{c0 bound} and 
depends on $\epsilon$, $t_0$ and $c_0=1/21.233$. 
\end{lemma}

\subsection{Bounding $|1/\zeta(1+it)|$}

Moving from a bound on the logarithmic derivative of $\zeta$ to one on the reciprocal of $\zeta$ is done in the usual way (see \cite[Theorem~3.11]{titchmarsh_theory_1986}), with some improvements using the trigonometric polynomial (see \cite[Proposition~A.2]{carneiro2022optimality}).

\begin{lemma}\label{1/zeta_main_lem}
Let $t_0\ge e^e$ and $d>0$ be constants.
Suppose that for each $t\ge t_0$,
\begin{equation*}
\text{if}\quad 1 \le \sigma \le 1+\frac{2d\log\log t}{\log t},\quad\text{then}\quad 
\left| \frac{\zeta'(\sigma +it)}{\zeta(\sigma+it)}\right| \le R_1\frac{\log t}{\log\log t}.
\end{equation*}
Then for any $t\ge t_0$ and any real number $d_1$ such that $0< d_1 \le 2d$, we have
\begin{equation*}
\left| \frac{1}{\zeta(1+it)}\right| \le Q_2(d_1,t_0)\frac{\log t}{\log\log t},
\end{equation*}
where
\begin{equation*}
Q_2(d_1,t_0) = \exp(R_1 d_1)\left(\frac{\log\log t_0}{\log t_0} +\frac{1}{d_1}\right)^{3/4}\left(C'(d_1, 2t_0)\left(\frac{\log 2}{\log t_0} +1\right)\right)^{1/4}.
\end{equation*}
The number $C'$ is defined as in \eqref{C'}.
\end{lemma}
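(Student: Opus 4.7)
The plan is to base the argument on the classical trigonometric inequality $3+4\cos\theta+\cos 2\theta \ge 0$, which via the Euler product of $\zeta$ gives, for every $\sigma>1$ and real $t$,
\begin{equation*}
|\zeta(\sigma)|^{3}\,|\zeta(\sigma+it)|^{4}\,|\zeta(\sigma+2it)| \ge 1,
\end{equation*}
and hence $1/|\zeta(\sigma+it)| \le |\zeta(\sigma)|^{3/4}|\zeta(\sigma+2it)|^{1/4}$. To pass from $\sigma+it$ back to the $1$-line, I would exploit the hypothesis on $|\zeta'/\zeta|$ by horizontal integration along the segment from $1+it$ to $\sigma+it$. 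The exponents $3/4$ and $1/4$ appearing in $Q_{2}$ match this classical polynomial, which is why no sharper trigonometric identity is needed.

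Concretely, I would set $\sigma := 1 + d_{1}\log\log t/\log t$ with $0<d_{1}\le 2d$, so that the hypothesis applies throughout $[1+it,\sigma+it]$. Integration yields
\begin{equation*}
\left|\log\frac{\zeta(\sigma+it)}{\zeta(1+it)}\right| \le \int_{1}^{\sigma}\Bigl|\frac{\zeta'}{\zeta}(u+it)\Bigr|\,du \le R_{1}\frac{\log t}{\log\log t}(\sigma-1) = R_{1}d_{1},
\end{equation*}
so that $1/|\zeta(1+it)| \le e^{R_{1}d_{1}}/|\zeta(\sigma+it)|$. Combined with the trigonometric inequality, this already accounts for the $\exp(R_{1}d_{1})$ prefactor in the definition of $Q_{2}$.

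It remains to bound the two $\zeta$-factors separately. For $\zeta(\sigma)$, the elementary estimate $\zeta(\sigma)\le 1+1/(\sigma-1)$ combined with the fact that $\log\log t/\log t$ is decreasing for $t\ge e^{e}$ (so bounded above by $\log\log t_{0}/\log t_{0}$) yields
\begin{equation*}
\zeta(\sigma) \le \left(\frac{\log\log t_{0}}{\log t_{0}} + \frac{1}{d_{1}}\right)\frac{\log t}{\log\log t},
\end{equation*}
which supplies the $(\cdot)^{3/4}$ factor in $Q_{2}$. For $|\zeta(\sigma+2it)|$ I would mimic the proof of Lemma~\ref{zeta_bound_after_1} but at height $2t$: using the Bastien--Rogalski bound and the fact that $\sigma \ge 1 + d_{1}\log\log(2t)/\log(2t)$ (again by monotonicity of $\log\log\cdot/\log\cdot$), we get $|\zeta(\sigma+2it)| \le \zeta(\sigma) \le C'(d_{1},2t_{0})\,\log(2t)/\log\log(2t)$. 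Finally, $\log(2t)/\log t \le 1+\log 2/\log t_{0}$ and $\log\log t \le \log\log(2t)$ together give $\log(2t)/\log\log(2t) \le (1+\log 2/\log t_{0})\log t/\log\log t$, matching the $(\cdot)^{1/4}$ factor in $Q_{2}$.

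The main delicacy is in the last step: the hypothesis forces the normalization $d_{1}\log\log t/\log t$, whereas invoking Lemma~\ref{zeta_bound_after_1} naturally wants the normalization $d_{1}\log\log(2t)/\log(2t)$ at height $2t$. Rather than introducing a $t$-dependent auxiliary parameter, one exploits the monotonicity of $\zeta$ on the real axis beyond $\sigma=1$ to bound $|\zeta(\sigma+2it)|$ by $\zeta$ evaluated at the \emph{smaller} abscissa $1+d_{1}\log\log(2t)/\log(2t)$, at which point the bound from Lemma~\ref{zeta_bound_after_1} (applied with $t\mapsto 2t$, $t_{0}\mapsto 2t_{0}$) applies cleanly.
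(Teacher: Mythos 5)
Your proposal is correct and follows essentially the same route as the paper: set $\sigma=1+\delta_1$ with $\delta_1=d_1\log\log t/\log t$, exponentiate the horizontally integrated hypothesis to pick up the $\exp(R_1 d_1)$ factor, invoke the $3$--$4$--$1$ trigonometric inequality to reduce to $\zeta(\sigma)^{3/4}|\zeta(\sigma+2it)|^{1/4}$, and then apply the elementary bound on $\zeta(\sigma)$ and Lemma~\ref{zeta_bound_after_1} (at height $2t$, using monotonicity of $\log\log u/\log u$) exactly as the paper does. The only cosmetic difference is that you bound $\lvert\log(\zeta(\sigma+it)/\zeta(1+it))\rvert$ directly while the paper works with the real part throughout, but the two are interchangeable here.
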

\begin{proof}
Suppose $t\ge t_0$.
Let $d_1$ be a real positive parameter such that $d_1\le 2d$.  
Let $\delta_1 = d_1\log\log t/ \log t$. 
It is easy to see that
\begin{align*}
\log\left| \frac{1}{\zeta(1+it)}\right| 
&= -\textup{Re} \log \zeta(1+it) \\
&= - \textup{Re} \log \zeta\left( 1+ \delta_1 +it\right) + \int_1^{1+\delta_1} \textup{Re}\left( \frac{\zeta'}{\zeta}(\sigma+it)\right) \text{d}\sigma. \nonumber 
\end{align*}
So, using our suppositions, we have
\begin{equation}\label{1/zeta_int}
    \log\left| \frac{1}{\zeta(1+it)}\right| 
    \le -\log\left| \zeta\left( 1+\delta_1 +it \right)\right| + R_1 d_1.
\end{equation}

On the other hand, for $\sigma>1$, and using the classical nonnegativity argument involving the trigonometric polynomial $3 + 4\cos\theta + \cos2\theta$ \cite[Section 3.3]{titchmarsh_theory_1986}, gives that 
$$\zeta^3(\sigma)|\zeta^4(\sigma+it)\zeta(\sigma +2it)| \ge 1.$$ 
Combining this with the inequality $\zeta(\sigma) \le \sigma/(\sigma-1)$ and Lemma \ref{zeta_bound_after_1},
we therefore obtain, on taking $\sigma = 1+\delta_1$,
\begin{equation}\label{1/zeta delta 1}
\begin{split}
    \left| \frac{1}{\zeta(1 + \delta_1 +it)}\right| &\le |\zeta(1 + \delta_1)|^{3/4}|\zeta(1 + \delta_1 + 2it)|^{1/4}\\
&\le \left( \frac{1 + \delta_1}{\delta_1}\right)^{3/4}\left(C'(d_1, 2t_0)\frac{\log 2t}{\log\log 2t}\right)^{1/4}.
\end{split}
\end{equation}
We now observe
\begin{equation}\label{1/zeta delta 2}
\begin{split}
    \frac{1 + \delta_1}{\delta_1} &\le \left(\frac{\log\log t_0}{\log t_0}+\frac{1}{d_1}\right) \frac{\log t}{\log\log t},\\
    \frac{\log 2t}{\log\log 2t} &\le \left(\frac{\log 2} {\log t_0} +1\right) \frac{\log t}{\log\log t}.
\end{split}
\end{equation}
So that, combining \eqref{1/zeta delta 2} and \eqref{1/zeta delta 1}, and noting 
by \eqref{1/zeta_int} we have
\begin{align*}
\left| \frac{1}{\zeta(1+it)}\right| &\le \frac{\exp(R_1 d_1)}{|\zeta(1+\delta_1 +it)|}, 
\end{align*}
the lemma follows.
\end{proof}

\subsection{Numeric calculations}

We define the following upper bounds:
\begin{equation*}
Q_1(C,d,\epsilon,t_0)\le R_1    \qquad \text{ and }\qquad Q_2(d_1,t_0)\le R_2.
\end{equation*}

The numeric calculations for $R_1$ and $R_2$ proceed as follows. First fix $t_0\ge e^e$ and a positive $\epsilon \le 1$. For the purposes of obtaining an upper bound $R_1$, we then optimise $Q_1(C,d,\epsilon,t_0)$ in Lemma \ref{zeta'/zeta_main_lem} over $C$ and $d$, all the while ensuring $C$, $d$, and $\epsilon$ are subject to our constraints \eqref{C assumption}, \eqref{r_eps_cond0} and \eqref{alpha cond}

Thereafter, with these values of $d$ and $R_1$, the relationship $d_1\le 2d$ allows us to obtain $R_2$ by optimising $Q_2(d_1,t_0)$ in Lemma \ref{1/zeta_main_lem} over $d_1$.

Our choices of
\begin{align*}
t_0 = 500, \qquad \epsilon = 0.517, 
\end{align*}
\begin{align*}
C = \frac{e^2}{8}, \qquad d = 0.018, \qquad d_1 = 0.0065,
\end{align*}
give us 
\begin{alignat*}{3}
r &\le 0.502,\qquad &&\alpha \le 0.0381,\qquad &&\beta = 0.274\ldots,
\end{alignat*}
and the upper bounds
\begin{equation*}
R_1 = 154.5, \qquad R_2 = 431.7.
\end{equation*}

In summary, we have for $t\ge 500$,
\begin{equation}\label{1/zeta_t0_500}
\frac{1}{|\zeta(1 + it)|} \le 431.7 \frac{\log t}{\log \log t}.
\end{equation}
Furthermore, in the region
\begin{equation*}
1 \le \sigma \le 1 + \frac{9}{250}\frac{\log\log t}{\log t},
\end{equation*}
we have for $t\ge 500$,
\begin{equation}\label{zeta'/zeta_t0_500}
\left|\frac{\zeta'}{\zeta}(\sigma+it)\right| \le 154.5 \frac{\log t}{\log \log t}.
\end{equation}

Finally, to cover the range from $3\le t\le 500$ in \eqref{1/zeta_t0_500}, we refer to the proof of Proposition $A.2$ in \cite{carneiro2022optimality}, where via interval arithmetic, they computed that for $2\le t \le 500$,
\begin{equation*}
\frac{1}{|\zeta(1 + it)|} \le 2.079 \log t.
\end{equation*}
Comparing this with the estimate in \eqref{1/zeta_t0_500}, we see after a simple verification that the bound
\begin{equation*}
\frac{1}{|\zeta(1 + it)|} \le 431.7 \frac{\log t}{\log \log t}
\end{equation*}
holds for all $t\ge 3$.

This concludes our proof of Theorem \ref{main theorem 1}. These bounds 
can certainly be improved. Some possible avenues of attack would be to treat the sum over zeros in the proof of Lemma \ref{log_deriv_zeta_lem2} more carefully, or to use a larger zero-free region.

\begin{remark}
It is possible with our methods to directly get a bound like \eqref{zeta'/zeta_t0_500} for $t<500$, especially if one were not concerned with the size of $R_2$. For instance, choosing
$t_0 = 16$, $\epsilon = 0.508$, $C = e^2/8$, $d = 0.0147$ and $d_1 = 0.0056$,
would return for $t\ge 16$ that if 
\begin{equation*}
1 \le \sigma \le 1 + \frac{147}{5000}\frac{\log\log t}{\log t},
\end{equation*}
then
\begin{equation*}
\left|\frac{\zeta'}{\zeta}(\sigma+it)\right| \le 177.5 \frac{\log t}{\log \log t}.
\end{equation*}
While this would cause a modest jump in $R_1$, the size of 
$R_2$ would increase significantly to $R_2= 511.1$.
This is because $R_2$ increases exponentially with $R_1$, as evident from Lemma \ref{1/zeta_main_lem}. The dominant factor there is $C'$, and any improvement in that quantity would help greatly. 
\end{remark}

\section{Background results}

\subsection{Background results related to Sections \ref{Theorem 1 proof}-\ref{Proposition proof}}

\begin{lemma}\cite[Lemma 2.1]{francis2021investigation}\label{harmonic sum}
    Let $\gamma$ denote Euler's constant. For $N\ge 1$, 
    \begin{equation*}
        \sum_{n = 1}^N \frac{1}{n} < \log N + \gamma +\frac{1}{2N}-\frac{1}{12N^2}+\frac{1}{64N^4}.
    \end{equation*}
\end{lemma}

\begin{theorem}\cite[Theorem 3.9]{patel_explicit_2022}\label{patel theorem}
    For $t>0$ and $n_1=\lfloor \sqrt{t/(2\pi)}\rfloor$ we have
    $$|\zeta(1+it)|\le \left|\sum_{n=1}^{n_1} \frac{1}{n^{1+it}}\right|+\frac{g(t)}{t^{1/2}}\left|\sum_{n=1}^{n_1} \frac{1}{n^{-it}}\right|+\mathcal{R}$$
    where 
    $$g(t)=\sqrt{2\pi}\exp\left(\frac{5}{3t^2}+\frac{\pi}{6t}\right),$$
    and 
\begin{equation*}
    \begin{split}
         \mathcal{R}=\mathcal{R}(t):=&\frac{1}{t^{1/2}}\left(\sqrt{\frac{\pi}{2}}+\frac{g(t)}{2}\right)
    +\frac{1}{t}\left(9\sqrt{\frac{\pi}{2}}+\frac{g(t)}{\sqrt{\pi(3-2\log 2)}}\right)\\
    &+\frac{1}{t^{3/2}}\left(\frac{968\pi^{3/2}+g(t)242\pi}{700}\right).
    \end{split}
\end{equation*}
\end{theorem}

Next we review some explicit estimates of exponential sums from the literature. The following lemma is a specialised third-derivative test for a phase function commonly encountered when estimating the Riemann zeta-function. It is due to \cite[Lemma 2.6]{hiary_improved_2022} which builds on \cite{hiary_explicit_2016}.

\begin{lemma}[Explicit third derivative test]\cite[Lemma 2.6]{hiary_improved_2022}\label{corput lemma}
Let $r$ and $K$ be positive integers, and let $t$ and $K_0$ be positive numbers.
    Suppose $K \ge K_0 > 1$. Let 
\begin{align*}
f(x) := \frac{t}{2\pi}\log\left(rK + x\right), \qquad W :=
    \frac{\pi(r + 1)^3K^3}{t},\qquad\lambda := \frac{(1 + r)^3}{r^3},
\end{align*}
so that $1/W \le |f'''(x)| \le \lambda/W$ for $0 \le x \le K$. Then, for each positive integer $L \le K$
and any $\eta > 0$,
\[
\left|\sum_{n = 0}^{L - 1}e^{2\pi i f(n)}\right|^2 \le \left(\frac{K}{W^{1/3}} + \eta\right)(\alpha K + \beta W^{2/3})
\]
where
\begin{align*}
\alpha &= \alpha(W, \lambda) := \frac{1}{\eta} + \frac{\eta\mu}{3W^{1/3}} + \frac{\mu}{3W^{2/3}} + \frac{32\mu}{15\sqrt{\pi}}\sqrt{\eta + W^{-1/3}},\\
\beta &= \beta(W) := \frac{32}{3\sqrt{\pi \eta}} + \left(2 - \frac{4}{\pi}\right)\frac{1}{W^{1/3}},\\
\mu &= \mu(\lambda) := \frac{1}{2}\lambda^{2/3}\left(1 + \frac{1}{(1 - K_0^{-1})\lambda^{1/3}}\right).
\end{align*}
\end{lemma}

\begin{remark}
    If $W\le 1$, say, then $\alpha$ will be greater than $1$, and so $\alpha L^2 \ge L^2$. Hence, for small $W$, it is better to use the triangle inequality than Lemma~\ref{corput lemma}.
\end{remark}

\begin{remark}
  We mention that there is a typo in the statement of \cite[Lemma 2.6]{hiary_improved_2022} where $N$ in the statement of the lemma should be $0$. 
This lemma is only applied with $N=0$ in \cite{hiary_improved_2022} in any case.\footnote{In passing, let us also point out that on \cite[211]{hiary_improved_2022}, the minimum in the definition of $y_j$ should be against $L-m$ rather than $L-m-1$. This does not affect Equation~5.4 on p.~211 since, as stated in \cite[Lemma 2.5]{hiary_improved_2022}, the bound on $|g''(x)|$ holds up to $K - m$, not only $K-m-1$. None of the results in \cite{hiary_improved_2022} are affected.} 
\end{remark}

The next two lemmas respectively represent another third derivative test and a $k$th derivative test for $k \ge 4$, due to \cite{yang_explicit_2023}. The reason we state this additional third derivative test is because $A_3$ and $B_3$ 
    comprise the boundary cases for the recursive formulas in Lemma~\ref{kth_deriv_test}.

\begin{lemma}[Another explicit third derivative test] \cite[Lemma 2.4]{yang_explicit_2023}\label{third_deriv_test}
Let $a$ and $N$ be integers such that $N\ge 1$. 
Suppose $f(x)$ has three continuous derivatives and $f'''(x)$ is monotonic over the interval 
$(a, a + N]$. Suppose further that  
there are numbers $\lambda_3 > 0$ and $\omega > 1$ such that $\lambda_3 \le |f'''(x)| \le \omega \lambda_3$ for all $x \in (a, a + N]$. Then for any $\eta > 0$,  
\[
\left|\sum_{n=a+1}^{a+N} e^{2\pi i f(n)}\right| \le A_3\omega^{1/2}N\lambda_3^{1/6} + B_3 N^{1/2}\lambda_3^{-1/6}
\]
where 
\begin{equation*}
\begin{split}
    &A_3=A_3(\eta, \omega) := \sqrt{\frac{1}{\eta \omega} + \frac{32}{15\sqrt{\pi}}\sqrt{\eta + \lambda_0^{1/3}} + \frac{1}{3}\left(\eta + \lambda_0^{1/3}\right)\lambda_0^{1/3}}\delta_3,
    \end{split}
\end{equation*}
\begin{equation*}
\begin{split}
    &B_3=B_3(\eta) := \frac{\sqrt{32}}{\sqrt{3}\pi^{1/4}\eta^{1/4}}\delta_3,
\end{split}
\end{equation*}
and $\lambda_0$ and $\delta_3$ are defined by
\begin{equation*}
\begin{split}
    &\lambda_0=\lambda_0(\eta,\omega) := \left(\frac{1}{\eta} + \frac{32\eta^{1/2}\omega}{15\sqrt{\pi}}\right)^{-3},
\end{split}
\end{equation*}
\begin{equation*}
\begin{split}
    &\delta_3 =\delta_3(\eta) := \sqrt{\frac{1}{2} + \frac{1}{2}\sqrt{1 + \frac{3}{8}\pi^{1/2}\eta^{3/2}}}.
\end{split}
\end{equation*}
\end{lemma}

\begin{lemma}[General explicit $k$-th derivative test] \cite[Lemma 2.5]{yang_explicit_2023}\label{kth_deriv_test} Let $a$, $N$ and $k$ be integers 
such that $N \ge 1$ and $k\ge 4$. Suppose $f(x)$ is equipped with $k$ continuous derivatives
and $f^{(k)}(x)$is  monotonic over the interval $(a,a+N]$.
Suppose further that there are numbers $\lambda_k > 0$ and $\omega > 1$ such that
$\lambda_k \le |f^{(k)}(x)| \le \omega \lambda_k$ for all $x \in (a, a+N]$. Then for any $\eta>0$,
\[
\left|\sum_{n=a+1}^{a+N} e^{2\pi i f(n)}\right| \le A_k \omega^{2/K}N\lambda_k^{1/(2K - 2)} + B_kN^{1 - 2/K}\lambda_k^{-1/(2K - 2)},
\]
where $K = 2^{k - 1}$, $A_k$ and $B_k$ are defined recursively via the formulas
\begin{equation}\label{s1_Ak_recursive_defn}
A_{k + 1}(\eta, \omega) := \delta_k\left(\omega^{-1/K} + \frac{2^{19/12}(K - 1)}{\sqrt{(2K - 1)(4K - 3)}}A_{k}(\eta, \omega)^{1/2}\right),
\end{equation}
\begin{equation}
B_{k + 1}(\eta) := \delta_k\frac{2^{3/2}(K - 1)}{\sqrt{(2K - 3)(4K - 5)}} B_{k}(\eta)^{1/2},
\end{equation}
with $A_3$ and $B_3$ as in Lemma \ref{third_deriv_test}, and $\delta_k$ defined by
\begin{equation}
\delta_k=\delta_k(\eta) := \sqrt{1 + \frac{2}{2337^{1 - 2/K}}\left(\frac{9\pi}{1024}\eta\right)^{1/K}}.
\end{equation}
\end{lemma}

\begin{lemma}\label{CD_deriv_test}
Let $a$, $b$ and $k$ be integers 
such that $a>0$, $b>a$ and $k\ge 3$. Let $t>0$. 
Let $h > 1$ be a number such that $b\le ha$.
Then for any $\eta>0$, 
$$
\left|\sum_{a < n \le b}\frac{1}{n^{1 +it}}\right| \le C_k a^{- k/(2K - 2)}t^{1/(2K - 2)} 
+ D_k a^{k/(2K - 2)- 2/K}t^{-1/(2K - 2)},
$$
where 
\begin{equation*}
\begin{split}
C_k=C_k(\eta, h) &:= A_k(\eta, h^k) h^{2k/K - k / (2K - 2)}(h - 1)\left(\frac{(k - 1)!}{2\pi}\right)^{\frac{1}{2K - 2}},\\
D_k=D_k(\eta, h) &:= B_k(\eta) h^{k/(2K - 2)}(h - 1)^{1 - 2/K}\left(\frac{2\pi}{(k - 1)!}\right)^{\frac{1}{2K - 2}}.
\end{split}
\end{equation*}
and $A_k(\eta, h^k)$ and $B_k(\eta)$ defined in accordance with 
Lemma~\ref{kth_deriv_test}.
\end{lemma}
\begin{proof}
This is a special case of Lemma~\ref{kth_deriv_test} and Lemma~\ref{third_deriv_test} 
with the phase function
$$f(x) = -\frac{t}{2\pi}\log x,\qquad \text{and hence}\qquad f^{(k)}(x)=(-1)^k \frac{t}{2\pi}\frac{(k-1)!}{x^k}.$$
Since $a < b \le ha$ by assumption,
$\lambda_k \le |f^{(k)}(x)| \le \omega \lambda_k$ for 
$a < x \le b$, where
\[
\lambda_k = \frac{t}{2\pi} \frac{(k - 1)!}{h^k a^k},\qquad \omega=h^k.
\]
Suppose that $k\ge 4$. Then, we may apply Lemma~\ref{kth_deriv_test} with these values of $\lambda_k$ and $\omega$, 
and with $N =b-a$. This furnishes a bound that increases with $N$. 
Noting that $N\le (h - 1)a$, we thus obtain
\begin{align*}
\left|\sum_{a < n \le b}\frac{1}{n^{it}}\right| &\le A_k(\eta,h^k)(h^{k})^{2/K}(h - 1)a
\left(\frac{t}{2\pi}\frac{(k - 1)!}{h^k a^k}\right)^{1/(2K - 2)} \\
&\qquad\qquad + 
B_k(\eta)((h - 1)a)^{1 - 2/K}\left(\frac{t}{2\pi}\frac{(k - 1)!}{h^k a^k}\right)^{-1/(2K - 2)} \\
&= C_k a^{1 - k/(2K - 2)}t^{1/(2K - 2)} 
+ D_k a^{1 - 2/K + k/(2K - 2)}t^{-1/(2K - 2)}.
\end{align*}
The desired result hence follows from partial summation in this case. If $k=3$, then 
the result follows from Lemma~\ref{third_deriv_test} using a similar calculation.
\end{proof}

\subsection{Background results related to Section \ref{Theorem 2 proof}}

Here we give some preliminary lemmas for our proofs of estimates on $|\zeta'(1+it)/\zeta(1+it)|$ and $|1/\zeta(1+it)|$. We reiterate that Section \ref{Theorem 2 proof} reuses variable names from previous sections. These variables mean different things in the confines of this section as this section is independent from Sections {\ref{Theorem 1 proof}, {\ref{A0 A2 calc}}, and {\ref{Proposition proof}}.

The first two lemmas, which form the basis of our method, are built upon a theorem of Borel and Carath\'eodory.
This theorem enables us to deduce an upper bound for the modulus of a function and its derivatives on a circle, from bounds for its real part on a larger concentric circle.

\begin{theorem}[Borel--Carath\'{e}odory]\label{borelcaratheodory}
Let $s_0$ be a complex number. 
Let $R$ be a positive  number, possibly depending on $s_0$. 
Suppose that the function $f(s)$ is analytic in  a region containing
the disk $|s-s_0|\le R$. Let $M$ denote 
the maximum of $\textup{Re}\, f(s)$ on the boundary $|s-s_0|=R$. 
Then, for any $r\in (0,R)$
and any $s$ such that $|s-s_0|\le r$,
\begin{equation*}
|f(s)| \le \frac{2r}{R-r}M + \frac{R+r}{R-r}|f(s_0)|.
\end{equation*}
If in addition $f(s_0)=0$, then 
for any $r\in (0,R)$ and any $s$ such that $|s-s_0|\le r$,
\begin{equation*}
|f'(s)| \le \frac{2R}{(R-r)^2}M.
\end{equation*}
\end{theorem}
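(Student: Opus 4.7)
The plan is to follow the classical proof by first reducing to $s_0 = 0$ via translation, then treating the case $f(s_0) = 0$ and extending to general $f(s_0)$ afterwards. In the case $f(s_0) = 0$, the key construction is the auxiliary function
\[
g(s) := \frac{f(s)}{2M - f(s)},
\]
which I would first show is analytic on the closed disk $|s - s_0| \le R$. This requires verifying $2M - f(s) \ne 0$; this holds on the boundary because $\mathrm{Re}(2M - f) \ge M \ge 0$ on $|s - s_0| = R$ (using $f(s_0) = 0$ to ensure $M \ge 0$), and propagates to the interior by the maximum principle for harmonic functions.

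Next I would bound $|g|$ on the boundary. Writing $f = u + iv$, the algebraic identity
\[
|2M - f|^2 - |f|^2 = 4M(M - u) \ge 0
\]
on $|s - s_0| = R$ gives $|g(s)| \le 1$ there, hence throughout the closed disk by maximum modulus. Since $g(s_0) = 0$, Schwarz's lemma applied to $g(s_0 + Rz)$ yields $|g(s)| \le |s - s_0|/R$. Solving the relation $f = 2Mg/(1 + g)$ and using the triangle inequality gives
\[
|f(s)| \le \frac{2M|s - s_0|}{R - |s - s_0|} \le \frac{2Mr}{R - r}
\]
for $|s - s_0| \le r$, matching the first assertion when $f(s_0) = 0$.

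For general $f(s_0) \ne 0$, I would apply the previous step to $F(s) := f(s) - f(s_0)$, which vanishes at $s_0$. One has $M_F := \max_{|s - s_0| = R}\mathrm{Re}\, F \le M + |f(s_0)|$, with $M_F \ge 0$ since $\mathrm{Re}\, F$ is harmonic and vanishes at the center. The triangle inequality then yields
\[
|f(s)| \le |f(s_0)| + \frac{2r M_F}{R-r} \le \frac{2rM}{R-r} + \frac{R+r}{R-r}|f(s_0)|,
\]
as claimed. For the derivative bound when $f(s_0) = 0$, I would apply Cauchy's integral formula on an intermediate circle $|w - s_0| = R'$ with $r < R' < R$, using the function bound just derived on this circle:
\[
|f'(s)| \le \frac{1}{2\pi}\oint_{|w - s_0| = R'}\frac{|f(w)|}{|w - s|^2}\,|dw| \le \frac{R' \cdot 2R'M/(R - R')}{(R' - r)^2},
\]
and then optimize over $R'$.

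The main obstacle I anticipate is recovering the precise constant $2R/(R-r)^2$ in the derivative bound; the natural choice $R' = (R+r)/2$ in the Cauchy step gives a bound of the same shape but with a somewhat larger constant, so obtaining the sharp constant will likely require a more careful optimization of $R'$, or alternatively a direct Cauchy estimate applied to the auxiliary function $g$ together with the relation $f' = 2Mg'/(1+g)^2$ to transfer the sharp bound on $g$ to $f'$.
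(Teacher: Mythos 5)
The paper does not give a proof of this theorem; it simply cites \cite[\S5.5]{titchmarsh1939theory} for the first inequality and \cite[Lemma~6.2]{montgomery2007multiplicative} for the sharper derivative bound. Your argument for the first inequality is a correct and complete version of the standard argument: the auxiliary function $g=f/(2M-f)$, the algebraic identity $|2M-f|^2-|f|^2=4M(M-u)$, Schwarz's lemma, and the reduction of the general case to the vanishing case by considering $F=f-f(s_0)$ all check out (with the understanding that the degenerate case $M=0$, $f(s_0)=0$ forces $f\equiv 0$).

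The derivative bound is where there is a genuine gap, and you were right to flag it. Neither of your two proposed routes actually reaches the constant $2R/(R-r)^2$. For the Cauchy-formula route with an intermediate circle $|w-s_0|=R'$, the bound is
\[
|f'(s)|\le \frac{2R'^2 M}{(R-R')(R'-r)^2},
\]
and minimising over $R'\in(r,R)$ still leaves a factor that diverges as $r\to R$; e.g.\ writing $R-r=\varepsilon R$ and optimising gives a bound of order $M/(R\varepsilon^3)$, while the target is $2M/(R\varepsilon^2)$, so the ratio tends to infinity. The transfer via $f'=2Mg'/(1+g)^2$ has the same defect: Schwarz--Pick gives $|g'(z)|\le R/(R^2-|z|^2)$ and Schwarz gives $|1+g(z)|\ge 1-|z|/R$, which combine to
\[
|f'(s)|\le \frac{2MR^3}{(R+r)(R-r)^3},
\]
again worse than the target by a factor $R^2/\bigl((R+r)(R-r)\bigr)\to\infty$ as $r\to R$.

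The missing idea is the Poisson (Borel--Carath\'eodory) integral representation, which is what \cite[Lemma~6.2]{montgomery2007multiplicative} uses. After translating $s_0$ to the origin, since $f(0)=0$ and $u:=\Re f$ satisfies $\frac{1}{2\pi}\int_0^{2\pi}u(Re^{i\theta})\,d\theta=u(0)=0$, one has for $|z|<R$
\[
f(z)=\frac{1}{2\pi}\int_0^{2\pi}\bigl(u(Re^{i\theta})-M\bigr)\,\frac{2z}{Re^{i\theta}-z}\,d\theta,
\]
and differentiating under the integral sign gives
\[
f'(z)=\frac{1}{2\pi}\int_0^{2\pi}\bigl(u(Re^{i\theta})-M\bigr)\,\frac{2Re^{i\theta}}{(Re^{i\theta}-z)^2}\,d\theta.
\]
Since $u-M\le 0$ on the boundary and $\frac{1}{2\pi}\int_0^{2\pi}(M-u)\,d\theta=M$, this yields directly
\[
|f'(z)|\le \frac{2R}{(R-|z|)^2}\cdot M\le \frac{2R}{(R-r)^2}M
\]
for $|z|\le r$, which is the stated constant. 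So you should replace the Cauchy/Schwarz--Pick step with this integral-representation argument to close the proof.
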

\begin{proof}
Proofs for estimating both $|f(s)|$ and $|f'(s)|$ are available in \cite[\S5.5]{titchmarsh1939theory}. However, if we include the restriction $f(s_0)=0$, we can refer to \cite[Lemma~6.2]{montgomery2007multiplicative}, which provides alternative proofs, resulting in a sharper estimate for $|f'(s)|$.
\end{proof}

\begin{lemma}\label{log_deriv_zeta_lem1}
Let $s_0$ be a complex number.
Let $r$ and $\alpha$ be positive numbers (possibly depending on $s_0$) 
such that $\alpha <1/2$.
Suppose that the function $f(s)$ is analytic in  a region containing 
the disk $|s-s_0|\le r$. Suppose further there is a number $A_1$ independent of $s$ such that,
$$\left|\frac{f(s)}{f(s_0)}\right| \le A_1,\qquad |s-s_0|\le r.$$
Then, for any $s$ in the disk $|s-s_0|\le \alpha r$
we have
\begin{equation*}
\left| \frac{f'(s)}{f(s)} - \sum_\rho \frac{1}{s-\rho}\right| \le \frac{4\log A_1}{r(1 -2\alpha)^2},
\end{equation*}
where $\rho$ runs through the zeros of $f(s)$ in the disk
$|s -s_0|\le \tfrac{1}{2} r$, counted with multiplicity.
\end{lemma}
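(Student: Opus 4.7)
The plan is to apply the derivative form of the Borel--Carath\'eodory theorem (Theorem~\ref{borelcaratheodory}) to the logarithm of an auxiliary function obtained by explicitly dividing out the zeros of $f$ in the inner disk. Enumerate the zeros of $f$ in $|s-s_0|\le r/2$ (with multiplicity) as $\rho_1,\ldots,\rho_n$, put
\begin{equation*}
h(s)=\prod_{k=1}^{n}(s-\rho_k),\qquad g(s)=\frac{f(s)}{h(s)}.
\end{equation*}
Since every zero of $h$ is cancelled by a zero of $f$ of at least equal multiplicity, $g$ extends analytically to all of $|s-s_0|\le r$, and by construction it is nonvanishing on the inner disk $|s-s_0|\le r/2$. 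Logarithmic differentiation gives the key identity
\begin{equation*}
\frac{g'(s)}{g(s)}=\frac{f'(s)}{f(s)}-\sum_{k=1}^{n}\frac{1}{s-\rho_k},
\end{equation*}
so the lemma reduces to bounding $|g'(s)/g(s)|$ by $4\log A_1/(r(1-2\alpha)^2)$.

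The next step is to show $|g(s)/g(s_0)|\le A_1$ throughout $|s-s_0|\le r$. On the boundary circle $|s-s_0|=r$ one writes
\begin{equation*}
\left|\frac{g(s)}{g(s_0)}\right|=\left|\frac{f(s)}{f(s_0)}\right|\prod_{k=1}^{n}\frac{|s_0-\rho_k|}{|s-\rho_k|},
\end{equation*}
and the geometric factor is at most $1$ because the triangle inequality yields $|s-\rho_k|\ge|s-s_0|-|s_0-\rho_k|\ge r-r/2=r/2\ge|s_0-\rho_k|$ for each $k$. Combined with the hypothesis $|f(s)/f(s_0)|\le A_1$, this gives the desired boundary bound, and the maximum modulus principle, applied to the analytic function $g/g(s_0)$, extends it to the full closed disk.

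Restricting attention to the simply connected disk $|s-s_0|\le r/2$, on which $g$ is zero-free, I would choose an analytic branch of $F(s):=\log(g(s)/g(s_0))$. This $F$ satisfies $F(s_0)=0$ and $\textup{Re}\,F(s)=\log|g(s)/g(s_0)|\le\log A_1$. Since $\alpha<1/2$ implies $\alpha r<r/2$, the derivative form of Theorem~\ref{borelcaratheodory} applies with outer radius $R=r/2$ and inner radius $\alpha r$, producing
\begin{equation*}
|F'(s)|\le\frac{2(r/2)}{(r/2-\alpha r)^2}\log A_1=\frac{4\log A_1}{r(1-2\alpha)^2},\qquad |s-s_0|\le\alpha r.
\end{equation*}
Combining this with $F'(s)=g'(s)/g(s)=f'(s)/f(s)-\sum_k 1/(s-\rho_k)$ completes the proof.

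I do not foresee a serious obstacle: the hypothesis $|f(s)/f(s_0)|\le A_1$ forces $f(s_0)\ne 0$, so $s_0$ avoids the $\rho_k$ and $g(s_0)\ne 0$; the singularities of $f/h$ at the $\rho_k$ are automatically removable; and the geometric inequality $|s_0-\rho_k|\le|s-\rho_k|$ persists even when a zero $\rho_k$ lies on the boundary of the inner disk. The one conceptual point worth emphasising is that it is crucial to cancel the zeros with the polynomial $h(s)=\prod(s-\rho_k)$ rather than with Blaschke factors; a Blaschke construction would introduce spurious terms of the form $(\bar\rho_k-\bar s_0)/((r/2)^2-(\bar\rho_k-\bar s_0)(s-s_0))$ in the logarithmic derivative that do not appear on the right-hand side of the claimed bound.
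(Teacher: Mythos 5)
Your proposal is correct and follows essentially the same route as the paper: factor out the zeros in the half-radius disk, bound $|g/g(s_0)|$ on the circle $|s-s_0|=r$ via the triangle inequality and extend inward by the maximum modulus principle, then apply the derivative form of the Borel--Carath\'eodory theorem to $\log(g(s)/g(s_0))$ with outer radius $r/2$ and inner radius $\alpha r$. The only differences from the paper's write-up are cosmetic (variable names and a slightly more explicit discussion of removable singularities and branch choice), plus your added remark on why Blaschke factors would be the wrong cancellation device.
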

\begin{proof}
We follow the proof in \cite[Section 3]{titchmarsh_theory_1986}.  
Let $g$ be the function 
$$g(s)=f(s)\prod_\rho \frac{1}{s-\rho},$$
where $\rho$ in the (finite) product runs through the zeros  of $f(s)$, counted with multiplicity, 
that satisfy $|\rho -s_0|\le\tfrac{1}{2} r$.
Since by construction the poles and zeros cancel, $g$ is analytic in $|s-s_0|\le r$, and 
does not vanish in the disk $|s-s_0|\le \tfrac{1}{2} r$.
Therefore, the function
$$h(s)=\log \frac{g(s)}{g(s_0)},$$
where the logarithm branch is determined by $h(s_0)=0$, 
is analytic in some region containing the disk $|s-s_0|\le \tfrac{1}{2} r$. 

Now, on the circle $|s-s_0|=r$, we have $|s-\rho|\ge \tfrac{1}{2} r\ge|s_0-\rho|$. 
So, on this circle,
\begin{equation*}
\left| \frac{g(s)}{g(s_0)} \right| = \left| \frac{f(s)}{f(s_0)}\prod_\rho \left( \frac{s_0-\rho}{s-\rho}\right) \right|\le \left| \frac{f(s)}{f(s_0)} \right|\le A_1.
\end{equation*} 
The inequality also holds in the interior of the circle, 
by the maximum modulus principle. Hence,
$\textup{Re}\, h(s)\le \log A_1$
throughout the disk $|s-s_0|\le r$.

Given this, we apply Theorem \ref{borelcaratheodory}
to $h(s)$ with the concentric circles centered at $s_0$ of radii $\alpha r$ and $\frac{1}{2}r$. 
This yields throughout the disk $|s-s_0|\le \alpha r$ that
\begin{equation}\label{h bound}
|h'(s)|\le \frac{4\log A_1}{r(1-2\alpha)^2}.
\end{equation}
On the other hand, since $g(s_0)$ is constant, we have
\begin{equation*}
|h'(s)|=\left| \frac{g'(s)}{g(s)}\right| = \left| \frac{f'(s)}{f(s)} -\sum_\rho \frac{1}{s-\rho}\right|.
\end{equation*}
Combining this formula with the inequality \eqref{h bound} completes the proof.
\end{proof}
\begin{lemma}\label{log_deriv_zeta_lem2}
Let $s$ and $s_0$ be complex numbers with real parts $\sigma$ and $\sigma_0$, respectively.
Let $r$, $\alpha$, $\beta$, $A_1$ and $A_2$ be positive numbers, 
possibly depending on $s_0$, such that $\alpha<1/2$ and $\beta<1$.
Suppose that the function $f(s)$ satisfies 
the conditions of Lemma \ref{log_deriv_zeta_lem1} with $r$, $\alpha$ and $A_1$, 
and that
$$\left| \frac{f'(s_0)}{f(s_0)}\right| \le A_2.$$ 
Suppose, in addition, that $f(s)
\neq 0$ for any $s$ in both 
the disk $|s-s_0|\le r$ and the right half-plane $\sigma \ge \sigma_0 - \alpha r$. 
Then, for any $s$ in the disk $|s-s_0|\le \alpha\beta r$,
\begin{equation*}
\left| \frac{f'(s)}{f(s)}\right| \le \frac{8\beta\log A_1}{r(1-\beta)(1-2\alpha)^2} + \frac{1+\beta}{1-\beta}A_2.
\end{equation*}
\end{lemma}
\begin{proof}
Using the inequality $-|z|\le - \textup{Re}\,z\le |z|$, valid for any complex number $z$,
together with Lemma \ref{log_deriv_zeta_lem1} in $|s-s_0|\le \alpha r$, we have 
throughout this disk,
\begin{align}\label{ff' bound}
-\textup{Re}\, \frac{f'(s)}{f(s)}  &\le \frac{4\log A_1}{r(1-2\alpha)^2}- \sum_\rho\textup{Re}\, \frac{1}{s-\rho},
\end{align}
where $\rho$ runs through the zeros  of $f$ (with multiplicity) 
satisfying $|\rho -s_0|\le\tfrac{1}{2} r$.  
By the assumption on the nonvanishing of $f$,  each $\rho$ 
in this sum satisfies $\textup{Re}\, \rho < \sigma_0 -\alpha r$.
On the other hand, all $s$ in the disk $|s-s_0|\le \alpha r$ clearly satisfy $\sigma\ge \sigma_0-\alpha r$. Hence,
$$\sum_{\rho}\textup{Re}\, \frac{1}{s-\rho} = \sum_{\rho}\frac{\sigma - \textup{Re}\, \rho}{|s-\rho|^2} > 0,$$
throughout the disk $|s-s_0|\le \alpha r$.
We may thus drop the sum in \eqref{ff' bound} and still obtain a valid upper bound, call it $M_1$.

Now, $-f'(s)/f(s)$ is analytic  in the disk $|s-s_0|\le \alpha r$ since by assumption
$f$ does not vanish there.
We can therefore apply Theorem~\ref{borelcaratheodory}
to $-f'(s)/f(s)$ using the two concentric circles $|s-s_0|\le \alpha\beta r$ and $|s-s_0|\le \alpha r$, 
and the upper bound $M_1\ge M$ on the circle 
$|s-s_0|\le \alpha r$, which yields the result.
\end{proof}

\section*{Acknowledgements}
We would like to thank Fatima Majeed for pointing out errors in an early version of this manuscript and giving us some helpful suggestions. 

\printbibliography

@article{hiary_explicit_2016,
	title = {An explicit van der {Corput} estimate for $\zeta(1/2 + it)$},
	volume = {27},
	number = {2},
	journal = {Indagationes Mathematicae},
	author = {Hiary, Ghaith A.},
	month = mar,
	year = {2016},
	pages = {524--533},
}

@phdthesis{patel_explicit_2021,
	type = {{PhD} {Thesis}},
	title = {Explicit sub-{Weyl} bound for the {Riemann} {Zeta} function},
	school = {The Ohio State University},
	author = {Patel, Dhir},
	year = {2021},
}

@article{ramare2016explicit,
  title={An explicit density estimate for Dirichlet $L$-series},
  author={Ramar{\'e}, O.},
  journal={Mathematics of Computation},
  volume={85},
  number={297},
  pages={325--356},
  year={2016}
}

@book{titchmarsh_theory_1986,
	address = {Oxford},
	title = {The {Theory} of the {Riemann} {Zeta}-function},
	publisher = {Oxford Science Publications},
	author = {Titchmarsh, E. C.},
	year = {1986},
}

@book{titchmarsh1939theory,
  title={The {Theory} of {Functions}},
  author={Titchmarsh, E. C.},
  year={1939},
  publisher={Oxford university press}
}

@book{montgomery2007multiplicative,
  title={Multiplicative number theory {I}: Classical theory},
  author={Montgomery, H. L. and Vaughan, R. C.},
  number={97},
  year={2007},
  publisher={Cambridge university press}
}

@article{littlewood_on_1928,
    author = {Littlewood, J. E.},
    title = {Mathematical Notes (5): On the Function $1/\zeta(1+ti)$},
    journal = {Proceedings of the London Mathematical Society},
    volume = {s2-27},
    number = {1},
    pages = {349-357},
    year = {1928}
}

@article{patel_explicit_2022,
    title = {An explicit upper bound for $|\zeta(1+it)|$},
    journal = {Indagationes Mathematicae},
    volume = {33},
    number = {5},
    pages = {1012-1032},
    year = {2022},
    author = {Dhir Patel},
}

@article{carneiro2022optimality,
  title={Optimality for the two-parameter quadratic sieve},
  author={Carneiro, Emanuel and Chirre, Andr{\'e}s and Helfgott, Harald Andr{\'e}s and Mej{\'\i}a-Cordero, Juli{\'a}n},
  journal={Acta Arithmetica},
  volume={203},
  pages={195--226},
  year={2022},
  publisher={Instytut Matematyczny Polskiej Akademii Nauk}
}

@article{trudgian2015explicit,
  title={Explicit bounds on the logarithmic derivative and the reciprocal of the Riemann zeta-function},
  author={Trudgian, Tim},
  journal={Functiones et Approximatio Commentarii Mathematici},
  volume={52},
  number={2},
  pages={253--261},
  year={2015},
  publisher={Adam Mickiewicz University, Faculty of Mathematics and Computer Science}
}

@article{hasanalizade2022counting,
  title={Counting zeros of the {R}iemann zeta function},
  author={Hasanalizade, E. and Shen, Q. and Wong, P-J.},
  journal={J. Number Theory},
  volume={235},
  pages={219--241},
  year={2022},
  publisher={Elsevier}
}

@article{trudgian_improved_2014,
    title = {An improved upper bound for the argument of the {Riemann} zeta-function on the critical line {II}},
    journal = {Journal of Number Theory},
    volume = {134},
    pages = {280-292},
    year = {2014},
    author = {Timothy Trudgian},
}

@article{hiary_improved_2022,
  title={An improved explicit estimate for $\zeta (1/2+ it)$},
  author={Hiary, G. A. and Patel, D. and Yang, A.},
  journal={J. Number Theory},
  volume={256},
  pages={195--217},
  year={2024},
  publisher={Elsevier}
}

@article{kadiri_explicit_2018,
    title = {Explicit zero density for the Riemann zeta function},
    journal = {Journal of Mathematical Analysis and Applications},
    volume = {465},
    number = {1},
    pages = {22-46},
    year = {2018},
    author = {Habiba Kadiri and Allysa Lumley and Nathan Ng},
}

@article{francis2021investigation,
  title={An investigation into explicit versions of {B}urgess' bound},
  author={Francis, F. J.},
  journal={J. Number Theory},
  volume={228},
  pages={87--107},
  year={2021},
  publisher={Elsevier}
}

@article{trudgian_new_2014,
	title = {A new upper bound for $|\zeta(1 + it)|$},
	volume = {89},
	number = {2},
	journal = {Bulletin of the Australian Mathematical Society},
	author = {Trudgian, T.},
	year = {2014},
}

@article{simonic_explicit_2020,
	title = {Explicit zero density estimate for the {Riemann} zeta-function near the critical line},
	volume = {491},
	number = {1, 124303, 41pp.},
	journal = {Journal of Mathematical Analysis and Applications},
	author = {Simoni\v{c}, Aleksander},
	month = nov,
	year = {2020},
}

@article{yang_explicit_2023,
  title={Explicit bounds on $\zeta (s)$ in the critical strip and a zero-free region},
  author={Yang, A.},
  journal={Journal of Mathematical Analysis and Applications},
  volume={534},
  number={2},
  pages={128124},
  year={2024},
  publisher={Elsevier}
}

@article {sound_2009,
    AUTHOR = {Soundararajan, Kannan},
     TITLE = {Moments of the {R}iemann zeta function},
   JOURNAL = {Ann. of Math. (2)},
  FJOURNAL = {Annals of Mathematics. Second Series},
    VOLUME = {170},
      YEAR = {2009},
    NUMBER = {2},
     PAGES = {981--993},
      ISSN = {0003-486X},
   MRCLASS = {11M06 (11M50)},
  MRNUMBER = {2552116},
MRREVIEWER = {Steven Joel Miller},
       DOI = {10.4007/annals.2009.170.981},
       URL = {https://doi-org.proxy.lib.ohio-state.edu/10.4007/annals.2009.170.981},
}

@article {lxs_2015,
    AUTHOR = {Lamzouri, Youness and Li, Xiannan and Soundararajan, Kannan},
     TITLE = {Conditional bounds for the least quadratic non-residue and
              related problems},
   JOURNAL = {Mathematics of Computation},
    VOLUME = {84},
      YEAR = {2015},
    NUMBER = {295},
     PAGES = {2391--2412},
      ISSN = {0025-5718},
   MRCLASS = {11N60 (11A15 11R52)},
  MRNUMBER = {3356031},
MRREVIEWER = {Greg Martin},
       DOI = {10.1090/S0025-5718-2015-02925-1},
       URL = {https://doi-org.proxy.lib.ohio-state.edu/10.1090/S0025-5718-2015-02925-1},
}

@article{littlewood_on_1926,
    author = {Littlewood, J. E.},
    title = {On the Riemann Zeta-Function},
    journal = {Proceedings of the London Mathematical Society},
    volume = {s2-24},
    number = {1},
    pages = {175-201},
    doi = {https://doi.org/10.1112/plms/s2-24.1.175},
    url = {https://londmathsoc.onlinelibrary.wiley.com/doi/abs/10.1112/plms/s2-24.1.175},
    eprint = {https://londmathsoc.onlinelibrary.wiley.com/doi/pdf/10.1112/plms/s2-24.1.175},
    year = {1926}
}

@article{weyl_zur_1921,
	title = {Zur {Absch\"{a}tzung} von $\zeta(1 + ti)$},
	volume = {10},
	issn = {0025-5874, 1432-1823},
	url = {http://link.springer.com/10.1007/BF02102307},
	doi = {10.1007/BF02102307},
	language = {de},
	number = {1-2},
	urldate = {2023-06-13},
	journal = {Mathematische Zeitschrift},
	author = {Weyl, Hermann},
	month = apr,
	year = {1921},
	pages = {88--101},
}

@article{ford_vinogradov_2002,
    author = {Ford, Kevin},
    title = {Vinogradov's Integral and Bounds for the Riemann Zeta Function},
    journal = {Proceedings of the London Mathematical Society},
    volume = {85},
    number = {3},
    pages = {565-633},
    year = {2002}
}

@article{chirre_conditional_2022,
    title = {Conditional estimates for the logarithmic derivative of Dirichlet L-functions},
    journal = {Indagationes Mathematicae},
    volume = {35},
    number = {1},
    pages = {14-27},
    year = {2024},
    author = {Andrés Chirre and Markus Valås Hagen and Aleksander Simoni\v{c}},
}

@article{chirre_conditional_2023,
  title={Bounding zeta on the $1$-line under the partial {R}iemann hypothesis},
  author={Chirre, Andr{\'e}s},
  journal={Bulletin of the Australian Mathematical Society},
  pages={1--8},
  year={2023},
  publisher={Cambridge University Press}
}

@article{simonic_estimates_2022,
	title = {Estimates for {L}-functions in the {Critical} {Strip} {Under} {GRH} with {Effective} {Applications}},
	volume = {20},
	number = {2},
	journal = {Mediterranean Journal of Mathematics},
	author = {Simonič, Aleksander},
	month = apr,
	year = {2023},
	pages = {87},
}

@article{lumley_explicit_2018,
  title={Explicit bounds for ${L}$-functions on the edge of the critical strip},
  author={Lumley, Allysa},
  journal={Journal of Number Theory},
  volume={188},
  pages={186--209},
  year={2018},
  publisher={Elsevier}
}

@article{palojarvi_conditional_2022,
  title={Conditional estimates for ${L}$-functions in the Selberg class},
  author={Paloj{\"a}rvi, Neea and Simoni{\v{c}}, Aleksander},
  journal={arXiv preprint arXiv:2211.01121},
  year={2022}
}

@inproceedings{delange1987remarque,
  title={Une remarque sur la d{\'e}riv{\'e}e logarithmique de la fonction z{\^e}ta de {R}iemann},
  author={Delange, Hubert},
  booktitle={Colloquium Mathematicum},
  volume={53},
  number={2},
  pages={333--335},
  year={1987},
  organization={Polska Akademia Nauk. Instytut Matematyczny PAN}
}

@article {li_2010,
    AUTHOR = {Li, Xiannan},
     TITLE = {Upper bounds on {$L$}-functions at the edge of the critical
              strip},
   JOURNAL = {Int. Math. Res. Not. IMRN},
  FJOURNAL = {International Mathematics Research Notices. IMRN},
      YEAR = {2010},
    NUMBER = {4},
     PAGES = {727--755},
      ISSN = {1073-7928,1687-0247},
   MRCLASS = {11M20 (11F67)},
  MRNUMBER = {2595006},
MRREVIEWER = {Steven\ Joel\ Miller},
       DOI = {10.1093/imrn/rnp148},
       URL = {https://doi.org/10.1093/imrn/rnp148},
}

\end{document}